\newcommand{\cH}{{\mathcal H}}
\newcommand{\cK}{{\mathcal K}}
\newcommand{\cU}{{\mathcal U}}
\newcommand{\cX}{{\mathcal X}}
\newcommand{\cY}{{\mathcal Y}}
\newcommand{\wtilE}{\widetilde{E}}\newcommand{\wtilF}{\widetilde{F}}
\newcommand{\wtilT}{\widetilde{T}}
\newcommand{\wtilU}{\widetilde{U}}\newcommand{\wtilV}{\widetilde{V}}
\newcommand{\wtilX}{\widetilde{X}}
\newcommand{\whatU}{\widehat{U}}\newcommand{\whatV}{\widehat{V}}
\newcommand{\de}{\delta}
\newcommand{\vep}{\varepsilon}
\newcommand{\im}{\textup{Im\,}}
\newcommand{\mat}[2]{\ensuremath{\left[\begin{array}{#1}#2\end{array} \right]}}
\newcommand{\sbm}[1]{\left[\begin{smallmatrix} #1\end{smallmatrix}\right]}
\newcommand{\ov}[1]{{\overline{#1}}}
\newcommand{\wtil}[1]{{\widetilde{#1}}}
\newcommand{\half}{\frac{1}{2}}
\newcommand{\ands}{\quad\mbox{and}\quad}
\newcommand{\ons}{\mbox{ on }}
\theoremstyle{plain}
\newtheorem{theorem}{Theorem}[section]
\newtheorem{corollary}[theorem]{Corollary}
\newtheorem{lemma}[theorem]{Lemma}
\newtheorem{proposition}[theorem]{Proposition}
\theoremstyle{definition}
\newtheorem{definition}[theorem]{Definition}
\newtheorem{example}[theorem]{Example}
\newcommand{\ts}{{\times}}
\newcommand{\dpl}{\dotplus}
\begin{document}

\title{Equivalence after extension and matricial coupling coincide with
Schur coupling, on separable Hilbert spaces}

\author{Sanne ter Horst\footnotemark[1], and Andr\'e C.M. Ran\footnotemark[2]}
\renewcommand{\thefootnote}{\fnsymbol{footnote}}
\footnotetext[1]{School~of~Computer,~Statistical~and~Mathematical~Sciences,
North-West~University,
Research unit for BMI,
Private~Bag~X6001,
Potchefstroom~2520,
South Africa.
E-mail: \texttt{sanne.terhorst@nwu.ac.za}
}
\footnotetext[2]{Department of Mathematics, FEW, VU university Amsterdam, De Boelelaan
    1081a, 1081 HV Amsterdam, The Netherlands
    and Unit for BMI, North-West~University,
Potchefstroom,
South Africa. E-mail:
    \texttt{a.c.m.ran@vu.nl}}

\date{}


\maketitle
\begin{abstract}
It is known that two Banach space operators that are Schur coupled are also equivalent after extension, or equivalently, matricially coupled. The converse implication, that operators which are equivalent after extension or matricially coupled are also Schur coupled, was only known for Fredholm Hilbert space operators and Fredholm Banach space operators with index 0. We prove that this implication also holds for Hilbert space operators with closed range, generalizing the result for Fredholm operators, and Banach space operators that can be approximated in operator norm by invertible operators. The combination of these two results enables us to prove that the implication holds for all operators on separable Hilbert spaces.
\end{abstract}

\noindent
{\bf Keywords}
Equivalence after extension, matricial coupling, Schur coupling.

\noindent
{\bf Mathematics Subject Classification 2010}: 47A05, 47A64, 15A99

\section{Introduction}
\setcounter{equation}{0}

This paper deals with a question raised in \cite{BT92b}, see also \cite{BT94,BGKR05}, concerning certain operator relations for bounded linear Banach space operators. In the sequel the term operator will be short for bounded linear operator and an invertible operator implies the inverse is a (bounded) operator as well. Moreover, by the term subspace we shall mean a closed linear manifold.

The operator relations in question are listed in the following definition.

\begin{definition}\label{D:eqrels}
Let $U$ on $\cX$ and $V$ on $\cY$ be Banach space operators.
\begin{itemize}
\item[(SC)]
The operators $U$ and $V$ are called {\em Schur coupled} in case there exists an operator matrix
\[
S=\mat{cc}{A&B\\C&D}\ons \cX\dpl\cY
\]
with $A$ and $D$ invertible,
\[
U=W_{2,2}(S):=A-BD^{-1}C \ands V=W_{1,1}(S):=D-CA^{-1}B.
\]
Here and in the sequel we use the notation $W_{i,j}(T)$, $i,j=1,2$, to indicate the {\em Schur complement} of a $2\ts 2$ operator matrix $T$ with respect to the $(i,j)$-th entry, provided the $(i,j)$-th entry is invertible.

\item[(EAE)]
The operators $U$ and $V$ are called {\em equivalent after extension} if there exist Banach spaces $\cX_0$ and $\cY_0$ and invertible operators $E$ mapping $\cY\dpl\cY_0$ onto $\cX\dpl\cX_0$ and $F$ mapping $\cX\dpl\cX_0$ onto $\cY\dpl\cY_0$ such that
\[
\mat{cc}{U&0\\0&I_{\cX_0}}=E \mat{cc}{V&0\\0&I_{\cY_0}} F.
\]

\item[(MC)] The operators $U$ and $V$ are called
{\em matricially coupled} if there exist an invertible operator
\begin{equation*}
\whatU\!=\!\mat{cc}{U_{11}&U_{12}\\U_{21}&U_{22}}\!\!\ons \cX\dpl\cY,
\mbox{ with inverse } \whatU^{-1}\!=\!\mat{cc}{V_{11}&V_{12}\\V_{21}&V_{22}}\!\!\ons
\cX\dpl\cY
\end{equation*}
such that $U=U_{11}$ and $V=V_{22}$.
\end{itemize}
\end{definition}

These three operator relations can be defined for operators acting between different Banach spaces as well, with this limitation that for the notion of Schur coupling to make sense the Banach spaces need to be isomorphic, since $A$ and $D$ are to be invertible operators. Letting $U$ and $V$ be operators that map a Banach space into itself may seem restrictive, but it is natural from the viewpoint of the question posed in \cite{BT92b}, and it makes the notation more managable. In the interest of readability we have decided to restrict ourselves to this case (as is also done in \cite{BGKR05}).

The question raised in \cite{BT92b} is whether these three operator relations coincide. Most implications in the proof of the equivalence of these three notions are known. It was proved in \cite{BGK84} that matricial coupling implies equivalence after extension, see also \cite[Section III.4]{GGK90}. The converse implication was proved in \cite{BT92a}. Hence the operator relations matricial coupling and equivalence after extension coincide. Consequently, matricial coupling is an equivalence relation, which is obviously the case for equivalence after extension, but less obviously so for matricial coupling. It was further proved, in \cite{BT92b}, that Schur coupling implies the other two operator relations, and that the converse holds under some additional constraints, leading to the notions of strong matricial coupling and strong equivalence after extension, which will be defined in Definition \ref{D:SMCnSEAE} below. All the above implications can be proved by rather constructive arguments. The details of these constructions will be given in Section \ref{S:review}. The remaining implication, equivalence after extension (= matricial coupling) implies Schur coupling, is still open in the general case. However, the implication was proved for matrices in \cite{BT94} and in \cite{BGKR05} for Fredholm operators acting between Hilbert spaces as well as Banach space Fredholm operators with index 0.

The following theorem is the main result of the present paper.

\begin{theorem}\label{T:main}
The operator relations Schur coupling, matricial coupling and equivalence after extension coincide for all Hilbert space operators acting between separable Hilbert spaces.
 \end{theorem}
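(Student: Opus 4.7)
The plan is to derive Theorem~\ref{T:main} by combining the two new EAE $\Rightarrow$ SC results announced in the abstract and proven in earlier sections of the paper: the one for Hilbert space operators with closed range, and the one for Banach space operators that can be approximated in operator norm by invertibles. Since SC $\Rightarrow$ EAE and MC $\Leftrightarrow$ EAE are already established in the literature (cf.\ \cite{BGK84,BT92a,BT92b}), only EAE $\Rightarrow$ SC remains to be checked, and the two ingredients cover it under two hypotheses that will turn out to be jointly exhaustive in the separable Hilbert space setting.

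The key extra input is a classical characterization, due in essence to Apostol, of the norm-closure of the invertible group in $\cB(\cH)$ for a separable infinite-dimensional Hilbert space $\cH$: an operator $T$ fails to lie in this closure if and only if $T$ is semi-Fredholm with nonzero index. Since every semi-Fredholm operator has closed range by definition, this yields the dichotomy that every operator on a separable Hilbert space either has closed range, or is a norm limit of invertible operators (or both). Applied to an EAE pair $(U,V)$ on separable Hilbert spaces $\cX$ and $\cY$, and combined with the standard fact that EAE preserves both properties in question---closed range, because the property is stable under direct summing with an identity and under composition with invertibles, and norm-approximability by invertibles, for the same reason plus a Schur-complement perturbation argument used to pass from approximants of $\sbm{V&0\\0&I}$ back to approximants of $V$---one arrives at the desired case split: either both $U$ and $V$ have closed range, or both are norm limits of invertibles.

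In the first case the closed-range version of EAE $\Rightarrow$ SC applies directly, and in the second case the approximation version does; either way $U$ and $V$ are Schur coupled, proving Theorem~\ref{T:main}. I expect the genuine technical difficulty to lie entirely within the two ingredient results---in particular, extending the Fredholm construction of \cite{BGKR05} to Hilbert space operators whose kernel and cokernel may be infinite-dimensional, and constructing a Schur coupling as an appropriate limit of Schur couplings for invertible approximants---while the assembly step sketched here is essentially the Apostol dichotomy together with some routine bookkeeping about the transfer of range and approximation properties across EAE.
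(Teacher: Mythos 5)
Your proposal is correct and follows essentially the same route as the paper: the proof of Theorem \ref{T:main} is exactly the assembly of Theorems \ref{T:MPcase} and \ref{T:InvApproxCase} via the dichotomy that every operator on a separable Hilbert space either has closed range or can be approximated in norm by invertible operators. The paper derives this dichotomy from the Feldman--Kadison theorem \cite{FK54} (a non-approximable operator factors as an invertible times a partial isometry, hence has closed range, cf.\ Lemma \ref{L:MPdec}) rather than from the equivalent semi-Fredholm nonzero-index formulation you cite, but this is merely a different classical form of the same closure-of-invertibles result.
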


The proof of Theorem \ref{T:main} will be given in Section \ref{S:MT}. It is based on two results which prove the remaining implication ((EAE) $\Rightarrow$ (SC)) for two specific classes of operators. These results are given in Theorems \ref{T:InvApproxCase} and \ref{T:MPcase} below. A theorem of Feldman and Kadison \cite{FK54} implies that all Hilbert space operators acting between separable Hilbert spaces are in either one of these two classes, hence Theorem \ref{T:main} follows. On non-separable Hilbert spaces there exist operators that are not contained in either of the two classes, which will be illustrated with an example in Section \ref{S:MT}.

The next theorem deals with the first class of operators for which the remaining implication holds. A proof is given in Section \ref{S:GenInvCase}.

\begin{theorem}\label{T:MPcase}
Assume $U$ on $\cX$ and $V$ on $\cY$ are Hilbert space operators with closed range. Then the following are equivalent:
\begin{itemize}
\item[(i)] $U$ and $V$ are Schur coupled;

\item[(ii)] $U$ and $V$ are equivalent after extension;

\item[(iii)] There exist invertible operators
\[
K:\ker U \to \ker V,\quad
K_*:\ker U^*\to\ker V^*.
\]
\end{itemize}
\end{theorem}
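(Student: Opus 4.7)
The implication (i)$\Rightarrow$(ii) is already known (see Section~\ref{S:review}): Schur coupling always implies equivalence after extension. For (ii)$\Rightarrow$(iii), assume $U$ and $V$ are equivalent after extension, so there exist invertibles $E,F$ with $U\oplus I_{\cX_0}=E(V\oplus I_{\cY_0})F$. Composition with invertible operators preserves kernel and cokernel dimensions, and the identity summands $I_{\cX_0}$ and $I_{\cY_0}$ have trivial kernels and cokernels, so
\[
\dim\ker U=\dim\ker V\ands\dim\ker U^*=\dim\ker V^*.
\]
Closed subspaces of a Hilbert space of the same Hilbert dimension are unitarily isomorphic, which supplies invertible (indeed unitary) operators $K$ and $K_*$.

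The substance lies in (iii)$\Rightarrow$(i), where we construct a Schur coupling $S=\sbm{A&B\\C&D}$ on $\cX\oplus\cY$ explicitly. The closed range hypothesis yields the orthogonal decompositions $\cX=\ker U\oplus(\ker U)^\perp=\ker U^*\oplus(\ker U^*)^\perp$, so that
\[
U=\mat{cc}{0&0\\0&U_1}:\ker U\oplus(\ker U)^\perp\to\ker U^*\oplus(\ker U^*)^\perp,
\]
with $U_1$ invertible, and analogously $V=\sbm{0&0\\0&V_1}$ with $V_1$ invertible. The plan is to build $S$ block-wise from the four invertible pieces $U_1,V_1,K,K_*$ together with the canonical inclusions and projections of the orthogonal splittings. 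The off-diagonal blocks $B$ and $C$ transport the identifications $K$ and $K_*$ between kernels across $\cX\oplus\cY$, while $A$ and $D$ are designed to be invertible by combining $U_1$, $V_1$ on the range side with contributions drawn from the opposite summand through the coupling. Once the formulas are fixed, verifying $A$, $D$ invertible and $W_{2,2}(S)=U$, $W_{1,1}(S)=V$ reduces to a direct block computation.

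The principal obstacle is that $K$ and $K_*$ do not combine into an isomorphism between $\ker U$ and $\ker U^*$: these two closed subspaces of the same Hilbert space $\cX$ can have distinct Hilbert dimensions even when $U$ has closed range, as the unilateral shift illustrates. Consequently $A$ cannot be obtained as $U$ plus a correction supported on $\ker U$, which would require exactly such an isomorphism. The construction circumvents this by arranging the Schur difference $A-U=BD^{-1}C$ to factor through $\cY$: the invertibility of $D$ together with $K$, $K_*$ supplies the missing bridge between $\ker U$ and $\ker U^*$, and symmetrically $\cX$ provides the bridge for $D-V=CA^{-1}B$. The closed-range hypothesis is essential, since without the split $U=0\oplus U_1$ into a zero block and a bounded invertible block the coupling construction has no foothold.
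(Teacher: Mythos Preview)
Your treatment of (i)$\Rightarrow$(ii) and (ii)$\Rightarrow$(iii) is fine; the dimension argument for (iii) is slightly more abstract than the paper's (which reads off explicit isomorphisms $E_{11}^c$ and $F_{11}^c$ from the block structure of $E$ and $F$ in Proposition~\ref{P:EAEstructure}), but it is correct.

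The gap is in (iii)$\Rightarrow$(i). You announce a direct construction of $S=\sbm{A&B\\C&D}$, correctly diagnose the obstacle (no isomorphism between $\ker U$ and $\ker U^*$ is available), describe in words what the off-diagonal blocks should accomplish, and then assert that ``once the formulas are fixed'' the verification is routine. But the formulas are never fixed. Saying that $A-U=BD^{-1}C$ ``factors through $\cY$'' is a tautology---it always does, by definition---not a resolution of the obstacle. The hard part is to produce concrete $A,B,C,D$ with $A$ and $D$ invertible and both Schur complements landing on $U$ and $V$, and your text contains only a description of what such a construction would have to achieve, not the construction itself. The paragraph on the ``principal obstacle'' reads as an honest acknowledgment that you have not found the formulas.

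The paper's route avoids the direct construction altogether. Assuming without loss of generality that the dimension of $\im U$ does not exceed that of $\im V$, one picks an isometry $L:\im U\to\im V$, sets $\cX_0=\im V\ominus\im L$, and writes down explicit invertible operators $E:\cY\to\cX\oplus\cX_0$ and $F:\cX\oplus\cX_0\to\cY$, built from $U_1,V_1,K,K_*,L$, that realize a \emph{one-sided} equivalence after extension: $E^{-1}(U\oplus I_{\cX_0})=VF$. The passage from one-sided equivalence after extension to Schur coupling is then a known lemma \cite[Proposition~5]{BGKR05}, which yields $S$ with $A=I_\cX$ and $D=E^{-1}F^{-1}$. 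The obstacle you identified is absorbed not by a clever block formula on $\cX\oplus\cY$ but by the extension space $\cX_0$, which accounts for the possible dimensional mismatch between $\im U$ and $\im V$.
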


Note that a Hilbert space operator has closed range if and only if it has a Moore-Penrose generalized inverse. In Section \ref{S:GenInvCase} we also prove a result for Banach space operators, see Theorem \ref{T:GIcase}. Here the closed range assumption is replaced by the assumption that the operators are generalized invertible, in the sense of \cite{N87}. However, it will also be assumed that there exists an isomorphism between $\ker U$ and $\cX/\im U$ and likewise between  $\ker V$ and $\cY/\im V$. Theorems \ref{T:MPcase} and \ref{T:GIcase} can be seen as direct generalizations of the results in \cite{BT92b}, see also \cite{BGKR05}, for Fredholm operators. In this case the additional constraint replaces the assumption in \cite{BT92b} that the Banach space Fredholm operators have index 0.

The second class of operators considered in this paper are those which can be approximated by invertible operators in the operator norm. Since the operator norm is the only norm that plays a role in the present paper, we will simply speak of `the norm' when indicating the operator norm and we will denote the norm of an operator $T$ by $\|T\|$. The following theorem will be proved in Section \ref{S:InvApprox}.

\begin{theorem}\label{T:InvApproxCase}
Let $U$ on $\cX$ and $V$ on $\cY$ be two Banach space operators that are equivalent after extension. Assume $U$ or $V$ can be approximated in norm by invertible operators. Then $U$ and $V$ are Schur coupled.
\end{theorem}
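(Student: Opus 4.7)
The plan is to combine the already-established equivalence (EAE)$\Leftrightarrow$(MC) with the invertible-approximation hypothesis, in order to upgrade an arbitrary MC into one with an invertible $(1,1)$-corner. From this ``upgraded'' MC, one extracts a Schur coupling using the strong-MC/SC machinery from \cite{BT92b} as recalled in Section \ref{S:review}, and then corrects the construction so that it couples the original pair $(U,V)$ rather than a nearby perturbed pair.

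Without loss of generality assume $U$ can be approximated in norm by invertibles; the case in which $V$ is the approximable one is symmetric, because (EAE), (MC) and (SC) are all symmetric under swapping the roles of $U$ and $V$. First, by the equivalence (EAE)$\Leftrightarrow$(MC) (see Section \ref{S:review}), obtain an invertible
\[
\whatU=\mat{cc}{U & U_{12}\\ U_{21} & U_{22}}\ons\cX\dpl\cY,
\qquad
\whatU^{-1}=\mat{cc}{V_{11} & V_{12}\\ V_{21} & V},
\]
together with the standard MC identities coming from $\whatU\whatU^{-1}=\whatU^{-1}\whatU=I$.

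Next, fix an invertible $\tilU\in B(\cX)$ with $\|\tilU-U\|<\|\whatU^{-1}\|^{-1}$; such a $\tilU$ exists by hypothesis. Then
\[
\whatU':=\mat{cc}{\tilU & U_{12}\\ U_{21} & U_{22}}=\whatU+\mat{cc}{\tilU-U & 0\\ 0 & 0}
\]
is a small perturbation of $\whatU$, hence invertible. Because its $(1,1)$-entry $\tilU$ is now invertible, the Schur complement $U_{22}-U_{21}\tilU^{-1}U_{12}$ is defined, and from invertibility of $\whatU'$ one reads off that it too is invertible; its inverse $\tilV$ is, by continuity, close to $V$ in norm. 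Thus $\whatU'$ is a matricial coupling of the invertible operator $\tilU$ with the invertible operator $\tilV$, placing us in the ``strong'' setting of Section \ref{S:review} where the explicit passage from MC to SC is available. This produces an SC matrix $S_{\tilU}$ on $\cX\dpl\cY$ coupling $\tilU$ and $\tilV$, built from $\tilU$, $U_{12}$, $U_{21}$, $U_{22}$ and their interrelations through $\whatU'$.

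The remaining step is to modify $S_{\tilU}$ so as to couple $U$ and $V$ (not $\tilU$ and $\tilV$). The natural idea is to take the candidate
\[
S=\mat{cc}{A & U_{12}\\ U_{21} & D},
\qquad A:=\tilU,\quad D:=V+U_{21}\tilU^{-1}U_{12},
\]
which gives $W_{1,1}(S)=D-U_{21}\tilU^{-1}U_{12}=V$ automatically; one then needs to verify that $D$ is invertible and that the compensating identity $U_{12}D^{-1}U_{21}=\tilU-U$ holds, so that $W_{2,2}(S)=\tilU-U_{12}D^{-1}U_{21}=U$. The identity for $D$ is to be extracted from the block equations $\whatU\whatU^{-1}=I$ (in particular the off-diagonal relations $UV_{12}+U_{12}V=0$ and $U_{21}V_{11}+U_{22}V_{21}=0$), together with the definition of $\tilU$; if the bare candidate above does not satisfy both identities simultaneously, one corrects $B$ and $C$ by terms proportional to $\tilU-U$, using smallness of $\|\tilU-U\|$ and a Neumann-series argument to keep $A$ and $D$ invertible.

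The key obstacle is precisely this last step: simultaneously enforcing \emph{both} Schur-complement identities for the original $(U,V)$ while maintaining invertibility of the two diagonal corners. The approximation hypothesis enters in two ways: it supplies the invertible $\tilU$ needed to activate the strong-MC construction, and it makes the required corrective perturbation small, so that invertibility of the corners (guaranteed initially by $\tilU$ invertible and $\whatU'$ invertible) survives the correction.
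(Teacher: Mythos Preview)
Your outline has a genuine gap in its final step, and the earlier appeal to the ``strong'' setting is also misplaced. Strong matricial coupling of $\tilU$ and $\tilV$ via $\whatU'$ would require the $(2,2)$-entry $U_{22}$ of $\whatU'$ to be invertible, not the $(1,1)$-entry $\tilU$; so the explicit MC$\,\to\,$SC formula of Theorem~\ref{T:SMCnSEAEeqsSC} is not available to you. More seriously, your ``bare candidate'' $S=\sbm{\tilU & U_{12}\\ U_{21} & D}$ with $D=V+U_{21}\tilU^{-1}U_{12}$ does not in general satisfy $W_{2,2}(S)=U$. For a concrete failure, take $\cX=\cY=\BC^2$, $U=V=\sbm{1&0\\0&0}$, and the matricial coupling $\whatU=\sbm{U&-B\\ B& I}$ with $B=\sbm{0&0\\0&1}$ (so $U_{12}=-B$, $U_{21}=B$, $U_{22}=I$). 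With $\tilU=\sbm{1&0\\ \de & \ep}$ one computes $D=\sbm{1&0\\0&-\ep^{-1}}$ and $U_{12}D^{-1}U_{21}=\sbm{0&0\\0&\ep}$, whence $W_{2,2}(S)=\sbm{1&0\\ \de & 0}\neq U$ whenever $\de\neq 0$. You acknowledge this possibility and propose to ``correct $B$ and $C$ by terms proportional to $\tilU-U$'', but no mechanism is given for enforcing \emph{both} Schur-complement identities simultaneously; smallness of the perturbation only protects invertibility of the corners, it does not produce the needed algebraic identity $BD^{-1}C=\tilU-U$. This is precisely the crux of the problem, and your proposal leaves it open.

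The paper avoids this difficulty by working on the EAE side rather than the MC side, and by never leaving the original pair $(U,V)$. It first passes to the special EAE form of Corollary~\ref{C:EAEspecialform} (so $F_{12}=I$ and $E_{12}^{(-1)}=V$), and then applies the transformation of Lemma~\ref{L:EAEtrans}, which sends $(E,F)$ to $(\wtilE,\wtilF)$ \emph{still realising EAE for $U$ and $V$ themselves}. The approximation hypothesis is used only once: choosing an invertible $V_i$ close to $V$ makes $N=E_{21}V_i+E_{22}E_{22}^{(-1)}$ invertible (it is close to $E_{21}V+E_{22}E_{22}^{(-1)}=I$), and then $Y=E_{22}^{(-1)}V_i^{-1}$, $X=0$ give $\wtilE_{21}=NV_i^{-1}$ invertible while $\wtilF_{12}=I$ stays invertible. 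This is strong EAE for $(U,V)$, hence Schur coupling by Theorem~\ref{T:SMCnSEAEeqsSC}, with no correction step required.
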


Note that while all operators on finite dimensional Hilbert spaces can be approximated with invertible operators, this not the case on any infinite Hilbert space. Indeed, any shift operator, or, more generally, any isometry which is not unitary, cannot be approximated by invertible operators. In Section \ref{S:MT} we recall some necessary and sufficient conditions from the literature under which an operator can be approximated by invertible operators.

\section{Review of the known implications}\label{S:review}
\setcounter{equation}{0}

In this section we discuss the known equivalence implications between the operator relations of Definition \ref{D:eqrels}. All these implications follow by explicit constructions. The proofs that the constructed operators have the required properties are straightforward, and verification is left to the reader. Although these implications are known, their proofs are not always easily accessible. Moreover, some of the constructions provided in this section are slightly different from the ones appearing in the literature, and they are essential in our proofs.

We start with the equivalence of matricial coupling and equivalence after extension.

\begin{theorem}\label{T:MCeqsEAE}
Let $U$ on $\cX$ and $V$ on $\cY$ be Banach space operators. Assume $U$ and $V$ are matricially coupled via operators $\whatU$ and $\whatV$ as in (MC). Set
\begin{equation}\label{EFspecialform1}
E=\mat{cc}{U_{12}&U\\U_{22}&U_{21}}\ands
F=\mat{cc}{-U_{21}&I_\cY\\V_{11}U&V_{12}}.
\end{equation}
Then $E$ and $F$ are invertible operators, with inverses given by
\[
E^{-1}=\mat{cc}{V_{21}&V\\V_{11}&V_{12}},\quad
F^{-1}=\mat{cc}{-V_{12}&I_\cX\\U_{22}V&U_{21}},
\]
and we have
\[
\mat{cc}{U&0\\0&I_{\cY}}=E\mat{cc}{V&0\\0&I_{\cX}}F.
\]
In particular, $U$ and $V$ are equivalent after extension.

Conversely, assume $U$ and $V$ are equivalent after extension, via operators $E$ and $F$ as in (EAE). Decompose $E$, $F$ and their inverses:
\begin{equation}\label{EFEFinvdec}
\begin{aligned}
E=\mat{cc}{E_{11}&E_{12}\\E_{21}& E_{22}},\  F^{-1}=\mat{cc}{F_{11}^{(-1)}&F_{12}^{(-1)}\\F_{21}^{(-1)}&F_{22}^{(-1)}} :\cY\dpl\cY_0\to\cX\dpl\cX_0,\\
F=\mat{cc}{F_{11}&F_{12}\\F_{21}& F_{22}},\  E^{-1}=\mat{cc}{E_{11}^{(-1)}&E_{12}^{(-1)}\\E_{21}^{(-1)}&E_{22}^{(-1)}} :\cX\dpl\cX_0\to\cY\dpl\cY_0.
\end{aligned}
\end{equation}
Set
\begin{align*}
\whatU=\mat{cc}{U& -E_{11}\\F_{11}&F_{12}E_{21}}\ands
\whatV=\mat{cc}{F_{12}^{(-1)}E_{21}^{(-1)}&F_{11}^{(-1)}\\-E_{11}^{(-1)}&V}.
\end{align*}
Then $\whatU$ and $\whatV$ are invertible and $\whatU^{-1}=\whatV$. In particular, $U$ and $V$ are matricially coupled.
\end{theorem}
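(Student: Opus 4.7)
Both directions reduce to direct block-matrix computation, with all the algebra driven by the identities produced by the inverse relations. For the forward direction (MC $\Rightarrow$ EAE), I would first unpack $\whatU\whatV=I_{\cX\dpl\cY}$ and $\whatV\whatU=I_{\cX\dpl\cY}$ into eight block identities, namely $U_{11}V_{11}+U_{12}V_{21}=I_\cX$, $U_{11}V_{12}+U_{12}V_{22}=0$, $V_{11}U_{11}+V_{12}U_{21}=I_\cX$, and so on. With $U=U_{11}$ and $V=V_{22}$ substituted into the proposed $E$ and its candidate inverse, each of the four entries of $EE^{-1}$ and of $E^{-1}E$ is literally one of these eight identities, so $E$ is invertible with the claimed inverse; the verification for $F$ against its candidate inverse is structurally the same. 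For the extension identity $\sbm{U&0\\0&I_\cY}=E\sbm{V&0\\0&I_\cX}F$, I would multiply out the right-hand side as a $2\times 2$ block product: the two off-diagonal blocks vanish after one use of the ``orthogonality''-type identities above, and the $(1,1)$ and $(2,2)$ blocks collapse to $U$ and $I_\cY$ respectively after combining $U_{12}V_{22}=-U_{11}V_{12}$ (respectively $U_{22}V_{22}=I_\cY-U_{21}V_{12}$) with $V_{11}U_{11}+V_{12}U_{21}=I_\cX$.

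For the converse (EAE $\Rightarrow$ MC), the block decompositions combined with $EE^{-1}=E^{-1}E=I$ and $FF^{-1}=F^{-1}F=I$ provide sixteen block identities, while expanding $E\sbm{V&0\\0&I}F=\sbm{U&0\\0&I}$ together with its equivalent rewriting $\sbm{V&0\\0&I}=E^{-1}\sbm{U&0\\0&I}F^{-1}$ provides eight further identities that couple $V$ (respectively products such as $F_{12}^{(-1)}E_{21}^{(-1)}$) with $U$ and the blocks of $E$ and $F$. I would then compute $\whatU\whatV$ as a block product and verify that each of its four entries collapses to $I_\cX$, $0$, $0$, or $I_\cY$ by pairing the right extension identity with the right inverse identity; the check $\whatV\whatU=I$ is completely symmetric. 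Once this is done, the claim $\whatU^{-1}=\whatV$ follows, and $U_{11}=U$, $V_{22}=V$ by construction, giving matricial coupling.

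The only step requiring a small amount of care is, in the converse direction, selecting the correct pair of identities for each of the four blocks of $\whatU\whatV$ and $\whatV\whatU$, since the extension identity intertwines $E$-blocks, $F$-blocks, $U$ and $V$ in several different ways. I do not anticipate any genuine obstacle: the proof is bookkeeping in the $2\times 2$ block algebra, of exactly the sort that the theorem statement leaves to the reader.
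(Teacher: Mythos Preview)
Your proposal is correct and is exactly the approach the paper intends: the authors explicitly state that ``the proofs that the constructed operators have the required properties are straightforward, and verification is left to the reader,'' and your plan carries out precisely that block-matrix verification using the eight identities from $\whatU\whatV=\whatV\whatU=I$ (forward direction) and the sixteen-plus-eight identities from $EE^{-1}=E^{-1}E=I$, $FF^{-1}=F^{-1}F=I$, and the two forms of the extension relation (converse direction).
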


The next theorem provides the constructions that prove that Schur coupling implies the other two operator relations.

\begin{theorem}\label{T:SCtoEAEnMC}
Assume $U$ on $\cX$ and $V$ on $\cY$ are Schur coupled via the block operator matrix $S=\sbm{A&B\\C&D}$ with $A$ and $D$ invertible. Set
\[
\begin{array}{c}
E=\mat{cc}{-BD^{-1}&U\\ D^{-1}&D^{-1}C},\quad
F=\mat{cc}{-D^{-1}C&I\\A^{-1}U&A^{-1}B},\\[.4cm]
\whatU=\mat{cc}{U&-BD^{-1}\\ D^{-1}C&D^{-1}}.
\end{array}
\]
Then $E$, $F$ and $\whatU$ are invertible with inverses
\[
\begin{array}{c}
E^{-1}=\mat{cc}{-CA^{-1}&V\\A^{-1}&A^{-1}B},
\quad F^{-1}=\mat{cc}{-A^{-1}B&I_\cX\\D^{-1}V&D^{-1}C},\\[.4cm]
\whatU^{-1}=\mat{cc}{A^{-1}&A^{-1}B \\ -CA^{-1} & V}.
\end{array}
\]
and
\[
\mat{cc}{U&0\\0&I_{\cY}}=E \mat{cc}{V&0\\0&I_{\cX}} F
\]
In particular, $U$ and $V$ are equivalent after extension and matricially coupled.
\end{theorem}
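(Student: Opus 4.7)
The plan is to observe that this statement is essentially a direct verification of the explicit formulas, and the cleanest organization proceeds in two stages: first establish matricial coupling by verifying that $\whatU$ is invertible with the stated inverse, then invoke Theorem \ref{T:MCeqsEAE} to obtain the operators $E$, $F$, their inverses, and the block identity automatically.

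For the first stage, I would compute the four block entries of $\whatU \whatU^{-1}$ and $\whatU^{-1}\whatU$ directly. The defining identities $U = A - BD^{-1}C$ and $V = D - CA^{-1}B$ rewrite as $A = U + BD^{-1}C$ and $D = V + CA^{-1}B$, so each block entry collapses on sight. For instance, the top-left entry of $\whatU\whatU^{-1}$ is $UA^{-1} + BD^{-1}CA^{-1} = (U + BD^{-1}C)A^{-1} = AA^{-1} = I$; the cross-diagonal entries vanish similarly (the top-right entry involves $UA^{-1}B - BD^{-1}V = (U+BD^{-1}C)A^{-1}B - B = 0$); and the bottom-right entry uses $D^{-1}(CA^{-1}B + V) = D^{-1}D = I$. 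An analogous four-step check handles $\whatU^{-1}\whatU = I$. Once this is done, $\whatU$ and $\whatU^{-1}$ exhibit $U$ and $V$ as matricially coupled.

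For the second stage, one checks that the matricial coupling formulas in Theorem \ref{T:MCeqsEAE} specialize, under the assignment
\[
U_{11}=U,\ U_{12}=-BD^{-1},\ U_{21}=D^{-1}C,\ U_{22}=D^{-1},
\]
\[
V_{11}=A^{-1},\ V_{12}=A^{-1}B,\ V_{21}=-CA^{-1},\ V_{22}=V,
\]
to precisely the operators $E$, $F$, $E^{-1}$ and $F^{-1}$ displayed in the statement. This is a matter of inspection. The block identity $\sbm{U & 0 \\ 0 & I_\cY} = E \sbm{V & 0 \\ 0 & I_\cX} F$ and the invertibility of $E$ and $F$ then follow directly from the conclusion of Theorem \ref{T:MCeqsEAE}, and equivalence after extension is established as well.

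The main obstacle is not conceptual but organizational: one must avoid the temptation to verify all of $EE^{-1}=I$, $FF^{-1}=I$, and the block identity by brute force block multiplication, which is lengthy and error-prone. Routing the argument through the matricial coupling construction of Theorem \ref{T:MCeqsEAE} reduces the only nontrivial verification to the four-entry check that $\whatU\whatU^{-1}=I$, which is handled cleanly by the Schur complement identities $U+BD^{-1}C=A$ and $V+CA^{-1}B=D$.
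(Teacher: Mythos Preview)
Your proposal is correct. The paper itself does not give a proof of this theorem: Section~\ref{S:review} states that ``the proofs that the constructed operators have the required properties are straightforward, and verification is left to the reader,'' so the paper's implicit approach is brute-force block multiplication of all the identities.

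Your route is a legitimate and cleaner organization. By verifying only $\whatU\whatU^{-1}=I$ (and $\whatU^{-1}\whatU=I$) directly and then reading off $E$, $F$, and their inverses from the explicit formulas \eqref{EFspecialform1} in Theorem~\ref{T:MCeqsEAE}, you avoid three separate $2\times 2$ block verifications. The substitution check works exactly as you claim: with $U_{12}=-BD^{-1}$, $U_{21}=D^{-1}C$, $U_{22}=D^{-1}$, $V_{11}=A^{-1}$, $V_{12}=A^{-1}B$, $V_{21}=-CA^{-1}$, the formulas in Theorem~\ref{T:MCeqsEAE} reproduce the displayed $E$, $F$, $E^{-1}$, $F^{-1}$ verbatim, and the factorization $\sbm{U&0\\0&I}=E\sbm{V&0\\0&I}F$ is part of that theorem's conclusion. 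The only caveat is that Theorem~\ref{T:MCeqsEAE} is itself left unproved in the paper, so your reduction is to another ``left to the reader'' statement; but this is harmless, and your explicit handling of the $\whatU\whatU^{-1}=I$ check via the identities $U+BD^{-1}C=A$ and $V+CA^{-1}B=D$ is exactly the right mechanism.
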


Following \cite{BT92b} we introduce the notions of strong matricial coupling and strong equivalence after extension.

\begin{definition}\label{D:SMCnSEAE}
Let $U$ on $\cX$ and $V$ on $\cY$ be Banach space operators. Then $U$ and $V$ are called {\em strongly matricially coupled} if $U$ and $V$ are matricially coupled as in (MC) and in addition the operators $U_{22}$ and $V_{11}$ are invertible. Moreover, $U$ and $V$ are called {\em strongly equivalent after extension} if $U$ and $V$ are equivalent after extension as in (EAE) and in addition the operators $E_{21}$ and $F_{12}$ in the operator matrix decompositions of $E$ and $F$ in \eqref{EFEFinvdec} are invertible.
\end{definition}

From the constructions in Theorem \ref{T:SCtoEAEnMC} it is clear that Schur coupling implies the stronger notions of matricial coupling and equivalence after extension. It was proved in \cite[Theorem 2]{BT92b} that these three notions in fact coincide. Details are given in the following result.

\begin{theorem}\label{T:SMCnSEAEeqsSC}
The operator relations (i) Schur coupling, (ii) strong matricial coupling, and (iii) strong equivalence after extension coincide. To be precise, let $U$ on $\cX$ and $V$ on $\cY$ be Banach space operators.

Assume $U$ and $V$ are strongly matricially coupled. Set $S=\sbm{A&B\\C&D}$ with
\[
A=W_2(\whatU)=U-U_{12}U_{22}^{-1}U_12,\quad B=-U_{12}U_{22}^{-1},\quad
C=U_{22}^{-1} U_{21},\quad D=U_{22}^{-1}.
\]
Then $A$ and $D$ are invertible, $U=W_{2,2}(S)$ and $V=W_{1,1}(S)$, i.e.,
$U$ and $V$ are Schur coupled.

Assume $U$ and $V$ are strongly equivalent after extension. Then the operators $E_{21}^{(-1)}$ and $F_{12}^{(-1)}$ in \eqref{EFEFinvdec} are invertible with inverses
\[
(E_{21}^{(-1)})^{-1}=E_{12}-E_{11}E_{21}^{-1}E_{22}\ands
(F_{12}^{(-1)})^{-1}=F_{21}-F_{22}F_{12}^{-1}F_{11}.
\]
Set
\[
S=\mat{cc}{A&B\\C&D}=\mat{cc}{(E_{21}^{(-1)})^{-1}(F_{12}^{(-1)})^{-1} & -E_{11}E_{21}^{-1}F_{12}^{-1} \\ E_{21}^{-1}E_{22}(F_{12}^{(-1)})^{-1} & E_{21}^{-1}F_{12}^{-1}}.
\]
Then $A$ and $D$ are invertible, $U=W_{2,2}(S)$ and $V=W_{1,1}(S)$, i.e.,
$U$ and $V$ are Schur coupled.

The implications (i) $\Rightarrow$ (ii) and (i) $\Rightarrow$ (iii)
follow from the constructions in Theorem \ref{T:SCtoEAEnMC}.
The implications (ii) $\Leftrightarrow$ (iii) follow from the
constructions in Theorem \ref{T:MCeqsEAE}.
\end{theorem}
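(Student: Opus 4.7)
The plan is to verify that the explicit constructions already in hand, together with the two block-matrix inverse formulas stated in the theorem, establish all implications. The implications (i) $\Rightarrow$ (ii) and (i) $\Rightarrow$ (iii) are immediate from Theorem \ref{T:SCtoEAEnMC}: inspection of the constructed $\whatU$ shows $U_{22}=D^{-1}$ is invertible and the $(1,1)$-entry of $\whatU^{-1}$ equals $A^{-1}$, so $V_{11}$ is invertible, giving strong matricial coupling; likewise $E_{21}=D^{-1}$ and $F_{12}=I_\cY$ are both invertible, giving strong equivalence after extension. For (ii) $\Leftrightarrow$ (iii) I appeal to Theorem \ref{T:MCeqsEAE}. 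Starting from strong MC, formula \eqref{EFspecialform1} produces $E, F$ with $E_{21}=U_{22}$ (invertible by hypothesis) and $F_{12}=I_\cY$, so strong EAE holds. For the reverse direction, the constructed $\whatU$ has $U_{22}=F_{12}E_{21}$, invertible as a product of invertibles, and $V_{11}=F_{12}^{(-1)}E_{21}^{(-1)}$; to obtain strong MC I must show that $E_{21}^{(-1)}$ and $F_{12}^{(-1)}$ are themselves invertible, which is precisely the content of the displayed inverse formulas.

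To prove these formulas, I write $EE^{-1}=I$ and $E^{-1}E=I$ blockwise. The $(2,1)$-block of $EE^{-1}=I$ reads $E_{21}E_{11}^{(-1)}+E_{22}E_{21}^{(-1)}=0$, so by invertibility of $E_{21}$ one obtains $E_{11}^{(-1)}=-E_{21}^{-1}E_{22}E_{21}^{(-1)}$. Substituting into the $(1,1)$-block $E_{11}E_{11}^{(-1)}+E_{12}E_{21}^{(-1)}=I_\cX$ yields $(E_{12}-E_{11}E_{21}^{-1}E_{22})E_{21}^{(-1)}=I_\cX$. A parallel computation using the corresponding blocks of $E^{-1}E=I$ produces the same operator as a right inverse of $E_{21}^{(-1)}$, so $E_{21}^{(-1)}$ is invertible with the stated inverse. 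The argument for $F_{12}^{(-1)}$ is identical.

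The remaining work is (ii) $\Rightarrow$ (i); the implication (iii) $\Rightarrow$ (i) then follows by chaining (iii) $\Rightarrow$ (ii) $\Rightarrow$ (i), though the explicit $S$ displayed for strong EAE can alternatively be verified directly by the same block manipulations applied to $E$ and $F$. Assuming strong MC with invertible $U_{22}$ and $V_{11}$, I define $S$ as in the statement. Then $D=U_{22}^{-1}$ is invertible, and $A=W_{2,2}(\whatU)$; since $\whatU$ is invertible with invertible $(2,2)$-block, the classical two-by-two inversion lemma gives that $A$ is invertible with $A^{-1}=(\whatU^{-1})_{1,1}=V_{11}$. Direct substitution then yields $W_{2,2}(S)=A-BD^{-1}C=U$. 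For $W_{1,1}(S)=V$, the block identities from $\whatU\whatV=I$ and $\whatV\whatU=I$ give $V_{12}=-V_{11}U_{12}U_{22}^{-1}$ and $V=V_{22}=U_{22}^{-1}-U_{22}^{-1}U_{21}V_{12}$, which combine to $V=U_{22}^{-1}+U_{22}^{-1}U_{21}V_{11}U_{12}U_{22}^{-1}=D-CA^{-1}B=W_{1,1}(S)$. The only real obstacle is bookkeeping: correctly identifying which of the eight block identities from $\whatU\whatV=I$, $\whatV\whatU=I$, $EE^{-1}=I$, $E^{-1}E=I$ supplies which substitution. Once the right identity is applied, each computation collapses to the desired formula.
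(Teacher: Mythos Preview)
Your proof is correct and follows precisely the route the paper intends: the paper provides the explicit constructions in the theorem statement and declares the verifications ``straightforward, and \ldots\ left to the reader,'' and you carry out exactly those verifications via the block identities from $\whatU\whatV=I$, $\whatV\whatU=I$, $EE^{-1}=I$, and $E^{-1}E=I$. Your derivation of the inverse formulas for $E_{21}^{(-1)}$ and $F_{12}^{(-1)}$ and your check that $W_{1,1}(S)=V$ are the standard Schur-complement manipulations the paper has in mind.
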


A consequence of Theorem \ref{T:SMCnSEAEeqsSC} is that in order to prove that equivalence after extension, or matricial coupling, implies Schur coupling, it suffices to prove that equivalence after extension implies strong equivalence after extension, or that matricial coupling implies strong matricial coupling.

\section{Proof of Theorem \ref{T:MPcase}}\label{S:GenInvCase}
\setcounter{equation}{0}

In this section we consider the case that the operators $U$ and $V$ admit a generalized inverse. We start with the case of Banach space operators, and will assume that $U$ and $V$ are generalized invertible operators, i.e., we assume there exist operators $U^+$ on $\cX$ and $V^+$ on $\cY$ such that
\[
U=UU^+U,\quad U^+=U^+UU^+\ands
V=VV^+V,\quad V^+=V^+VV^+.
\]
We now recall another characterization of generalized invertibility. The operators $U$ and $V$ are generalized invertible if and only if the spaces $\ker U$ and  $\im U$ are complemented in $\cX$ and $\ker V$ and $\im V$ are complemented in $\cY$, cf., \cite[Theorem XI.6.1]{GGK90}, that is, there exist subspaces $\cX_1,\cX_2\subset\cX$ and $\cY_1,\cY_2\subset\cY$ such that
\begin{equation}\label{SpacesDec}
\cX=\cX_1\dpl\ker U=\im U\dpl\cX_2\ands
\cY=\cY_1\dpl\ker V=\im V\dpl\cY_2.
\end{equation}
Moreover, the subspaces $\cX_1$, $\cX_2$, $\cY_1$ and $\cY_1$ can be chosen in such a way that with respect to
the decompositions in \eqref{SpacesDec} the operators $U$ and $V$ are of the form
\begin{equation}\label{UVdec}
\begin{aligned}
&U=\mat{cc}{U_1&0\\0&0}:\cX_1\dpl\ker U\to\im U\dpl\cX_2,
\quad\mbox{with $U_1$ invertible;}\\
&V=\mat{cc}{V_1&0\\0&0}:\cY_1\dpl\ker V\to\im V\dpl\cY_2,
\quad\mbox{with $V_1$ invertible.}
\end{aligned}
\end{equation}
In that case $U^+=\sbm{U_1^{-1}&0\\0&0}$ and $V^+=\sbm{V_1^{-1}&0\\0&0}$
are generalized inverses of $U$ and $V$, respectively.

Generalized invertibility is a natural concept to consider in the
context of equivalence after extension, in the following sense, see
\cite{BT92a}.

\begin{lemma}
Assume $U$ and $V$ are equivalent after extension. Then $U$ is generalized invertible if and only if $V$ is generalized invertible.
\end{lemma}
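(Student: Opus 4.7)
The plan is to reduce the statement to two elementary preservation properties of generalized invertibility. Property (a): if $T_1 = E T_2 F$ with $E$ and $F$ invertible, then $T_1$ is generalized invertible if and only if $T_2$ is, since a one-line check shows that $F^{-1}T_2^+E^{-1}$ satisfies both defining identities whenever $T_2^+$ is a generalized inverse of $T_2$. Property (b): for any operator $T$ on $\cX$ and any Banach space $\cZ$, the augmented operator $T\dpl I_\cZ$ on $\cX\dpl\cZ$ is generalized invertible if and only if $T$ is; the forward direction being immediate via $T^+\dpl I_\cZ$. Granting both properties, the lemma follows at once: applying (b), then (a) to the EAE identity $U\dpl I_{\cX_0}=E(V\dpl I_{\cY_0})F$, then (b) once more, yields the chain
\[
U\text{ g.i.}\ \Llra\ U\dpl I_{\cX_0}\text{ g.i.}\ \Llra\ V\dpl I_{\cY_0}\text{ g.i.}\ \Llra\ V\text{ g.i.}
\]

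The only step requiring any work is the converse direction of (b). For this I would suppose that $\sbm{A&B\\C&D}$ is a generalized inverse of $T\dpl I_\cZ$ and expand the identity $(T\dpl I_\cZ)\sbm{A&B\\C&D}(T\dpl I_\cZ)=T\dpl I_\cZ$ in block form. Reading off the $(1,1)$-entry yields the inner-inverse relation $TAT=T$. I then claim that $A':=ATA$ is a full generalized inverse of $T$: a direct manipulation using only $TAT=T$ gives $TA'T=(TA)(TAT)=TAT=T$ and $A'TA'=A(TAT)(ATA)=(AT)(ATA)=A(TAT)A=ATA=A'$.

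This is the sole obstacle, and it is a routine algebraic computation rather than a structural difficulty; once it is dispatched, everything else follows formally from (a) and (b). One could alternatively argue via the complemented-subspaces characterization recalled in the paragraph preceding the lemma, but handling the range side requires some care with closedness under projections onto $\cX$, and the algebraic route above avoids this entirely.
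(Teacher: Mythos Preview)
Your argument is correct. The block computation for property (b) goes through exactly as you indicate: from $(T\oplus I)\sbm{A&B\\C&D}(T\oplus I)=T\oplus I$ the $(1,1)$-entry gives $TAT=T$, and then $A'=ATA$ is a reflexive generalized inverse of $T$ by the standard inner-to-reflexive trick, which you spell out correctly. Property (a) is immediate, and the chain of equivalences assembles the lemma.

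This is a genuinely different route from the paper's. The paper does not argue directly from the (EAE) identity at all: it first invokes the equivalence of (EAE) with matricial coupling, so that $U$ and $V$ sit as complementary diagonal corners of an invertible $2\times 2$ block operator and its inverse, and then cites Theorem 2.1 of \cite{BGK84} (see also \cite[Section III.4]{GGK90}) for the preservation of generalized invertibility under matricial coupling. Your approach stays entirely within (EAE), is self-contained, and uses only the two elementary stability facts (a) and (b); it avoids both the detour through (MC) and the external reference. The paper's route is shorter on the page precisely because it outsources the work, but yours makes the lemma independent of the coupling machinery and of \cite{BGK84}. Your closing remark about the complemented-subspaces characterization is also apt: that alternative would require checking that a complement of $\im(T\oplus I)$ in $\cX\dpl\cZ$ yields a complement of $\im T$ in $\cX$, which is true but slightly more delicate than the purely algebraic argument you give.
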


\begin{proof}[\bf Proof.]
Since equivalence after extension is equivalent to matricial coupling, we have that
$U$ and $V$ are matricially coupled. The lemma then follows from Theorem 2.1 in \cite{BGK84} (compare also \cite{GGK90}, Section III.4).
\end{proof}

The next proposition explains the structure of the operators $E$ and $F$ appearing in the definition of equivalence after extension with respect to the block operator decompositions in \eqref{UVdec}.

\begin{proposition}\label{P:EAEstructure}
Let $U$ on $\cX$ and $V$ on $\cY$ be generalized invertible Banach space operators. Assume $U$ and $V$ are equivalent after extension as in (EAE). Then with respect to a decomposition as in \eqref{UVdec} the invertible operators $E$ and $F$ in (EAE) admit operator matrix representations of the form
\begin{equation}\label{EFdec}
\begin{aligned}
&E=\mat{cc|c}{
E_{11}^a&E_{11}^b&E_{12}^a\\0&E_{11}^c&0\\ \hline E_{21}^a&E_{21}^b&E_{22}}
:(\im V\dpl \cY_2)\dpl \cY_0\to (\im U\dpl \cX_2)\dpl\cX_0,\\
&F=\mat{cc|c}{F_{11}^a&0&F_{12}^a\\F_{11}^b&F_{11}^c&F_{12}^b\\ \hline F_{21}^a&0&F_{22}}
:(\cX_1\dpl\ker U) \dpl \cX_0\to(\cY_1\dpl \ker V)\dpl \cY_0,
\end{aligned}
\end{equation}
with $E_{11}^c$, $F_{11}^c$, $\sbm{E_{11}^a&E_{12}^a\\ E_{21}^a& E_{22}}$ and
$\sbm{F_{11}^a&F_{12}^a\\ F_{21}^a& F_{22}}$ invertible.
Moreover, we may without
loss of generality assume the operators
\begin{equation}\label{rep01}
E_{11}^b,\quad E_{12}^b,\quad F_{11}^b,\quad F_{12}^b
\end{equation}
to be zero, that is, if $E$ and $F$ have the above form
and the operators in \eqref{rep01} are replaced by zero operators, then the
resulting operators $E$ and $F$ are still invertible and (EAE) remains
to hold with the modified operators $E$ and $F$.
\end{proposition}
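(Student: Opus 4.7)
The argument has two conceptual parts. To begin, observe that the block structure \eqref{EFdec} is forced by the equation in (EAE). Set $\wtilU=\sbm{U&0\\0&I_{\cX_0}}$ and $\wtilV=\sbm{V&0\\0&I_{\cY_0}}$. From \eqref{UVdec} we have $\ker\wtilU=\ker U\dpl\{0\}$ and $\im\wtilU=\im U\dpl\cX_0$, with the analogous identities for $\wtilV$. Since $E\wtilV F=\wtilU$ with $E,F$ invertible, we deduce $F(\ker\wtilU)=\ker\wtilV$ from $\ker\wtilU=F^{-1}(\ker\wtilV)$, and $E(\im\wtilV)=\im\wtilU$ from $\im\wtilU=E(\im\wtilV)$ (using the surjectivity of $F$). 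Decomposing $E$ and $F$ with respect to the subspaces in \eqref{UVdec} together with $\cX_0$, $\cY_0$, these two mapping properties translate precisely into the vanishing entries in the $\ker U$-column of $F$ and the $\cY_2$-column of $E$ displayed in \eqref{EFdec}.

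Next, regroup $E$ as a $2\ts 2$ block operator mapping $(\im V\dpl\cY_0)\dpl\cY_2$ to $(\im U\dpl\cX_0)\dpl\cX_2$. The zero row just obtained makes $E$ block upper triangular in this regrouping, so the invertibility of $E$ forces both diagonal blocks, namely $\sbm{E_{11}^a&E_{12}^a\\ E_{21}^a&E_{22}}$ and $E_{11}^c$, to be invertible. A symmetric regrouping for $F$, namely $(\cX_1\dpl\cX_0)\dpl\ker U\to(\cY_1\dpl\cY_0)\dpl\ker V$, produces a block lower triangular operator with diagonal blocks $\sbm{F_{11}^a&F_{12}^a\\ F_{21}^a&F_{22}}$ and $F_{11}^c$, establishing the remaining invertibility assertions.

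Finally, exploit the freedom in choosing $E$ and $F$. Any operator $\Delta$ with range in $\ker\wtilV=\ker V\dpl\{0\}$ satisfies $\wtilV\Delta=0$, hence $\wtilV(F+\Delta)=\wtilV F$; and any operator $\Delta'$ that vanishes on $\im\wtilV=\im V\dpl\{0\}\dpl\cY_0$ satisfies $\Delta'\wtilV F=0$, so $(E+\Delta')\wtilV F=E\wtilV F$. Applying both perturbations simultaneously therefore preserves (EAE). Choose $\Delta$ to subtract the entries in \eqref{rep01} that belong to the $\ker V$-row of $F$, and $\Delta'$ to subtract the corresponding entries in the $\cY_2$-column of $E$; after these corrections $E$ and $F$ become block diagonal in the regroupings of the previous paragraph, with the same (invertible) diagonal blocks, hence remain invertible. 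The main bookkeeping obstacle is simply to organize the two decompositions consistently so that all of the prescribed perturbations act on designated off-diagonal entries only; once this is set up, both the preservation of (EAE) and the invertibility of the modified $E$, $F$ follow immediately from the identities $\wtilV\Delta=0$ and $\Delta'\wtilV=0$.
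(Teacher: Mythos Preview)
Your argument is correct and follows essentially the same route as the paper: deduce the zero pattern from $F(\ker\wtilU)=\ker\wtilV$ and $E(\im\wtilV)=\im\wtilU$, extract the invertibility of the diagonal blocks, and then observe that the remaining entries play no role in the identity $E\wtilV F=\wtilU$. Two small remarks: in your first paragraph the vanishing entries of $E$ lie in the $\cX_2$-\emph{row}, not the $\cY_2$-column (your own phrase ``zero row just obtained'' in the next paragraph shows you have the right picture); and your block-triangular regrouping is exactly the paper's Schur complement step in disguise, since with the given zero pattern the Schur complement of $E$ with respect to $E_{11}^c$ is literally $\sbm{E_{11}^a&E_{12}^a\\E_{21}^a&E_{22}}$. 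Your perturbation framing for the ``moreover'' part (via $\wtilV\Delta=0$ and $\Delta'\wtilV=0$) is a clean repackaging of the paper's direct verification that $E\wtilV F$ depends only on $E^a$, $F^a$, $U_1$, $V_1$.
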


\begin{proof}[\bf Proof.]
The fact that $F\sbm{V&0\\0&I_{\cY_0}}E=\sbm{U&0\\0&I_{\cX_0}}$
holds with $E$ and $F$ invertible shows that $F$ maps $\ker U$
boundedly invertible onto $\ker V$. This shows that the $(1,2)$-
and $(3,2)$-entries in the block operator decomposition of $F$
in \eqref{EFdec} are zero operators and that $F_{11}^c$ is
invertible. Similarly one finds the zero entries in the block
operator decomposition of $E$ in \eqref{EFdec} and the fact that
$E_{11}^c$ is invertible.  Due to the zero entries in \eqref{EFdec}
the Schur complements in $E$ with respect to $E_{11}^c$ and in
$F$ with respect to $F_{11}^c$ equal
$E^a:=\sbm{E_{11}^a&E_{12}^a\\ E_{21}^a& E_{22}}$ and
$F^a:=\sbm{F_{11}^a&F_{12}^a\\ F_{21}^a& F_{22}}$, respectively.
Since $E$ and $F$ are invertible, so are the Schur complements
$E^a$ and $F^a$.

Since $E_{11}^c$, $F_{11}^c$, $E^a$ and $F^a$ are all invertible,
replacing the operators of \eqref{rep01} in $E$ and $F$ by zero
operators, $E$ and $F$ clearly remain invertible. Moreover,
using the decompositions of $U$ and $V$ in \eqref{UVdec} we see that
$F\sbm{V&0\\0&I_{\cY_0}}E=\sbm{U&0\\0&I_{\cX_0}}$ reduces to
\[
\mat{cc}{U^a&0\\0&I_{\cX_0}}=E^a\mat{cc}{V^a&0\\0&I_{\cY_0}}F^a.
\]
Hence the operators in \eqref{rep01} have no part in the validity
of (EAE). Thus (EAE) remains to hold when the operators in \eqref{rep01}
are replaced by zero operators.
\end{proof}

Next we present the extension of Theorem 3 in \cite{BT92b} to the case of
generalized invertible Banach space operators.

\begin{theorem}\label{T:GIcase}
Let $U$ on $\cX$ and $V$ on $\cY$ be generalized invertible Banach space operators. Assume there exists isomorphisms between $\ker U$ and $\cX/\im U$ and between $\ker V$ and $\cY/\im V$. Then with respect to any decomposition as in \eqref{UVdec} the following are equivalent:
\begin{itemize}
\item[(i)] $U$ and $V$ are Schur coupled;

\item[(ii)] $U$ and $V$ are equivalent after extension;

\item[(iii)] There exist an isomorphism $K:\ker U \to \ker V$.
\end{itemize}
\end{theorem}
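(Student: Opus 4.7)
The implication (i) $\Rightarrow$ (ii) is immediate from Theorem \ref{T:SCtoEAEnMC}. For (ii) $\Rightarrow$ (iii), my plan is to invoke Proposition \ref{P:EAEstructure}: its proof already shows that the block $F_{11}^c$ in the decomposition \eqref{EFdec} of $F$ is an invertible operator from $\ker U$ onto $\ker V$, which supplies the required isomorphism $K$. Note that this direction does not use the extra hypothesis on isomorphisms between kernels and cokernels; that assumption is needed only to reverse the implication.

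The substantive work is (iii) $\Rightarrow$ (i), where the plan is to build a Schur coupling explicitly. From the given $K:\ker U \to \ker V$, combined with the hypothesis and the natural identifications $\cX/\im U \cong \cX_2$ and $\cY/\im V \cong \cY_2$ coming from \eqref{SpacesDec}, one obtains invertible operators $\Phi:\ker U \to \cX_2$ and $\Psi:\ker V \to \cY_2$. Writing $J_{\cX_2}, J_{\cY_2}$ for the inclusions into $\cX,\cY$ and $P_{\ker U}, P_{\ker V}$ for the projections along $\cX_1, \cY_1$, I would set
\[
\begin{array}{ll}
A = U + J_{\cX_2}\Phi P_{\ker U}, & B = J_{\cX_2}\Phi K^{-1} P_{\ker V}, \\
D = V + J_{\cY_2}\Psi P_{\ker V}, & C = J_{\cY_2}\Psi K P_{\ker U},
\end{array}
\]
and take $S = \sbm{A & B \\ C & D}$. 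With respect to the decompositions in \eqref{UVdec}, $A$ and $D$ become the block-diagonal operators $\sbm{U_1 & 0 \\ 0 & \Phi}$ and $\sbm{V_1 & 0 \\ 0 & \Psi}$, both manifestly invertible. A direct block-matrix computation then yields $D^{-1}C = \sbm{0 & 0 \\ 0 & K}$ and $A^{-1}B = \sbm{0 & 0 \\ 0 & K^{-1}}$, from which
\[
BD^{-1}C = J_{\cX_2}\Phi P_{\ker U} = A - U, \qquad CA^{-1}B = J_{\cY_2}\Psi P_{\ker V} = D - V,
\]
so that $U = W_{2,2}(S)$ and $V = W_{1,1}(S)$, as required.

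I expect the main obstacle to be conceptual rather than technical: one must recognize how to interlock the given isomorphism $K$ with the auxiliary isomorphisms $\Phi, \Psi$ in the off-diagonal blocks $B, C$ so that the $K$'s cancel in both Schur complements while the $\Phi, \Psi$'s remain and reproduce precisely the missing $\cX_2$- and $\cY_2$-corrections to $U$ and $V$. The role of the extra hypothesis then becomes transparent: without isomorphisms $\ker U \cong \cX/\im U$ and $\ker V \cong \cY/\im V$ there would be no operators available with which to populate the $\cX_2$- and $\cY_2$-blocks of $B$ and $C$, and the construction could not even be set up. This also strongly suggests that the extra hypothesis is not just a convenient artifact of the proof, but genuinely necessary in the generalized invertible Banach space setting.
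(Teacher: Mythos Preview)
Your proof is correct. The implications (i) $\Rightarrow$ (ii) and (ii) $\Rightarrow$ (iii) are handled exactly as in the paper. For (iii) $\Rightarrow$ (i) the paper takes a slightly different route: it constructs an explicit $4\times 4$ block operator on $(\cX_1\dpl\ker U)\dpl(\cY_1\dpl\ker V)$ that exhibits $U$ and $V$ as \emph{strongly matricially coupled}, and then invokes Theorem~\ref{T:SMCnSEAEeqsSC} to pass from strong matricial coupling to Schur coupling. You instead write down the Schur coupling matrix $S$ directly. In fact, if one feeds the paper's matricial coupling $\whatU$ into the formulas of Theorem~\ref{T:SMCnSEAEeqsSC}, one recovers (up to harmless sign changes on $B$ and $C$) precisely your $S=\sbm{A&B\\C&D}$ with $A=\sbm{U_1&0\\0&\Phi}$ and $D=\sbm{V_1&0\\0&\Psi}$; your $\Phi,\Psi$ are the paper's $J_U,J_V$. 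So the two arguments are really the same construction viewed from different ends: the paper's detour through (SMC) is more in keeping with the philosophy of Section~\ref{S:review}, while your direct construction is shorter and self-contained, requiring no appeal to Theorem~\ref{T:SMCnSEAEeqsSC}. Your closing remark that the extra hypothesis appears to be genuinely needed in the Banach setting is reasonable speculation, but note that the paper does not assert this, and Theorem~\ref{T:MPcase} shows the hypothesis can be dispensed with on Hilbert spaces.
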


\begin{proof}[\bf Proof.]
By Theorem \ref{T:SCtoEAEnMC} we have (i) $\Rightarrow$ (ii).
Assuming (ii), we can take $F_{11}^c$ in Proposition \ref{P:EAEstructure} as an isomorphism from $\ker U$ to $\ker V$. Hence (ii) $\Rightarrow$ (iii).

It remains to prove (iii) $\Rightarrow$ (i), for which it suffices to prove that (iii) implies strong matricial coupling, by Theorem \ref{T:SMCnSEAEeqsSC}. We shall follow the argumentation of the proof of Theorem 3 in \cite{BT92b}. Since $U$ and $V$ are generalized invertible, they can be written as in \eqref{UVdec}. Now let $J_U : \ker U \to \cX_2$ and $J_V: \ker V\to \cY_2$ be isomorphisms. Such isomorphisms exist, because of our assumption that there exist isomorphisms between $\ker U$ and $\cX/\im U$ and $\ker V$ and $\cY/\im V$, respectively, and the fact that any complement of $\im U$ is isomorphic to $\cX/\im U$ and, likewise, any complement of $\im V$ is isomorphic to $\cX/\im V$. It follows that $K_2=J_V K J_U^{-1}$ is an isomorphism from $\cX_2$ onto $\cY_2$. Therefore,
\[
\mat{cc|cc}{U_1&0&0&0\\0&0&0&K_2^{-1}\\\hline 0&0&V_1^{-1}&0\\0&-K&0&J_V^{-1}}=
\mat{cc|cc}{U_1^{-1}&0&0&0\\0&K^{-1}J_V^{-1}K_2&0&-K^{-1}\\\hline 0&0&V_1&0\\0&K_2&0&0}^{-1}.
\]
Since $\sbm{V_1^{-1}&0\\ 0&J_V^{-1}}$ and $\sbm{U_1^{-1}&0\\ 0&K^{-1}J_V^{-1}K_2}$ are both invertible, it follows that $U$ and $V$ are strongly matricially coupled by taking for $\whatU$ the left hand side in the above identity.
\end{proof}

Note that the implication (i) $\Rightarrow$ (ii) $\Rightarrow$ (iii) were proved without using the additional assumptions that $\ker U$ and $\cX/\im U$ are isomorphic as well as   $\ker V$ and $\cY/\im V$.

For Hilbert space operators, the notions of generalized
invertibility coincides with Moore-Penrose invertibility, or equivalently,
with the condition that the operator in question has closed range. Moreover,
if $U$ and $V$ are generalized invertible Hilbert space operators, then we
can take
\begin{equation}\label{HilbertChoice}
\begin{aligned}
\cX_1& =\cX\ominus\ker U=\im U^*,\quad
\cX_2=\cX\ominus\im U=\ker U^*,\\
\cY_1& =\cY\ominus\ker V=\im V^*,\quad
\cY_2=\cY\ominus\im V=\ker V^*,
\end{aligned}
\end{equation}
in the representations \eqref{UVdec} of $U$ and $V$.

\begin{proof}[\bf Proof of Theorem \ref{T:MPcase}.]
(i) $\Rightarrow$ (ii) follows from Theorem \ref{T:SCtoEAEnMC} and
(ii) $\Rightarrow$ (iii) follows the invertibility of the operators
$E_{11}^c$ and $F_{11}^c$ in Proposition \ref{P:EAEstructure}, along with the
choice of \eqref{HilbertChoice} for the complements in the representations \eqref{UVdec} of $U$ and $V$.

It remains to prove (iii) $\Rightarrow$ (i). Hence assume that
invertible operators $K$ and $K_*$ as in (iii) exist. Define $\cX_1$, $\cX_2$,
$\cY_1$ and $\cY_2$ as in \eqref{HilbertChoice}.
Without loss of generality we may assume the Schauder dimension of $\im U$
does not exceed the Schauder dimension of $\im V$. This implies there
exists an isometry $L:\im U\to \im V$. Let $\cX_0$ be a closed complement
of $\im L$ in $\im V$ such that $L=\sbm{L_1\\0}:\im U\to\sbm{\im L\\\cX_0}$ with
$L_1$ invertible. (One can take for instance for $\cX_0$ the orthogonal complement of $\im L$, in $\im V$. It is here that we use the fact that we are working with Hilbert space operators.) Then
\begin{align*}
&E=\mat{ccc}{L_1^{-1}&0&0\\0&0&K_*^{-1}\\0&I_{\cX_0}&0}
:\im L\dpl\cX_0\dpl\cY_2 \to \im U\dpl \cX_2\dpl\cX_0,\\
&F=\mat{cc}{V_1^{-1}&0\\0& I_{\ker V}}
\mat{cc|c}{L_1 U_1&0&0\\ 0&0&I_{\cX_0}\\ \hline 0&K&0}
:(\cX_1\dpl \ker U)\dpl\cX_0\to\cY_1\dpl \ker V
\end{align*}
are invertible operators, $E$ mapping $\cY=\im V \dpl \cY_2=(\im L\dpl \cX_0)\dpl \cY_2$ onto $\cX\dpl \cX_0=(\im U\dpl \cX_2)\dpl \cX_0$ and $F$ mapping $\cX\dpl\cX_0=(\cX_1\dpl \ker U)\dpl\cX_0$ onto $\cY=\cY_1\dpl\ker V$ via the intermediate space $\im V\dpl\ker V=\im L\dpl \cX_0\dpl \ker V$. Moreover, one easily verifies that
\[
E^{-1}\mat{cc}{U&0\\0& I_{\cX_0}}=V F.
\]
Hence $U$ and $V$ are equivalent after one-sided extension. However, Proposition 5 in
\cite{BGKR05} shows that equivalence after one-sided extension implies Schur coupling.
Indeed, following the construction from \cite[Proposition 5]{BGKR05}, let $P=\mat{cc}{I_\cX&0}$ be the canonical projection from $\cX\dpl \cX_0$ onto $\cX$ and $J=\sbm{I_{\cX}\\0}$ the canonical embedding of $\cX$ into $\cX\dpl \cX_0$. Set
\[
S=\mat{cc}{A&B\\C&D}=\mat{cc}{I_\cX & PF^{-1}\\ E^{-1}J(I_\cX-U)& E^{-1}F^{-1}} \ons \cX\dpl \cY.
\]
Clearly $A$ and $D$ are invertible, and it is a straightforward computation to verify that $U=W_{2,2}(S)$ and $V=W_{1,1}(S)$. Hence $U$ and $V$ are Schur coupled.
\end{proof}

As observed in the proof of Theorem \ref{T:MPcase}, the complication in the case that $U$ and $V$ are Banach space operators is that there may not exist a closed complement $\cX_0$ of the range of the isometry $L$. In fact, a careful inspection of the proof shows that this is the only complication that occurs, provided we let the invertible operator $K_*$ act from $\cX_2$ onto $\cY_2$. Hence we can conclude that Theorem \ref{T:MPcase} holds with $U$ and $V$ Banach space operators, with $\ker U^*$ and $\ker V^*$ replaced by the spaces $\cX_2$ onto $\cY_2$, respectively, from the representations of $U$ and $V$ in \eqref{UVdec}, under the additional assumption that there exists a generalized invertible isometry from $\im U$ into $\im V$ or a generalized invertible isometry from $\im V$ into $\im U$.

\section{Proof of Theorem \ref{T:InvApproxCase}}\label{S:InvApprox}
\setcounter{equation}{0}


As a direct consequence of the constructions in Theorem \ref{T:MCeqsEAE}
we find that the extension spaces in the equivalence after extension
relation can be chosen specifically, as well as some of the entries
of $E$ and $F$.

\begin{lemma}\label{L:toFS}
Assume the Banach space operators $U$ on $\cX$ and $V$ on $\cY$ are equivalent after extension via operators $E$ and $F$ as in (EAE). Decompose $E$ and $F$ and their inverses $E^{-1}$ and $F^{-1}$ as in \eqref{EFEFinvdec}. Then $U$ and $V$ are also equivalent after extension via
the operators
\[
\wtilE=\mat{cc}{-E_{11}&U\\F_{12}E_{21}&F_{11}}\ands
\wtilF=\mat{cc}{-F_{11}&I_\cY\\F_{12}^{(-1)}E_{21}^{(-1)}U & F_{11}^{(-1)}}.
\]
with inverses
\[
\wtilE^{-1}=\mat{cc}{-E_{11}^{(-1)}&V\\F_{12}^{(-1)}E_{21}^{(-1)}&F_{11}^{(-1)}}\ands
\wtilF^{-1}=\mat{cc}{-F_{11}^{(-1)}&I_\cX\\F_{12}E_{21}V & F_{11}}.
\]
\end{lemma}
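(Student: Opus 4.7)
The plan is to obtain this lemma by applying Theorem \ref{T:MCeqsEAE} twice: once in the direction from equivalence after extension to matricial coupling, and once in the reverse direction. This will produce new intertwining operators $\wtilE$ and $\wtilF$ whose entries are expressible in terms of the entries of the original $E$, $F$ and their inverses.

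Concretely, starting from the given EAE data $E$ and $F$ with the block decompositions in \eqref{EFEFinvdec}, the second part of Theorem \ref{T:MCeqsEAE} supplies an invertible operator
\[
\whatU=\mat{cc}{U&-E_{11}\\ F_{11}& F_{12}E_{21}}\ons\cX\dpl\cY
\]
whose inverse is
\[
\whatV=\whatU^{-1}=\mat{cc}{F_{12}^{(-1)}E_{21}^{(-1)}&F_{11}^{(-1)}\\ -E_{11}^{(-1)}&V},
\]
so that $U$ and $V$ are matricially coupled via $\whatU$ and $\whatV$. Reading off the entries of $\whatU$ and $\whatV$ as $U_{ij}$ and $V_{ij}$, we have
\[
U_{11}=U,\ U_{12}=-E_{11},\ U_{21}=F_{11},\ U_{22}=F_{12}E_{21},
\]
\[
V_{11}=F_{12}^{(-1)}E_{21}^{(-1)},\ V_{12}=F_{11}^{(-1)},\ V_{21}=-E_{11}^{(-1)},\ V_{22}=V.
\]

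Now feed this matricial coupling back into the first part of Theorem \ref{T:MCeqsEAE}. That part produces EAE operators by the explicit formulas in \eqref{EFspecialform1}, namely
\[
\wtilE=\mat{cc}{U_{12}&U\\ U_{22}&U_{21}}\ands
\wtilF=\mat{cc}{-U_{21}&I_\cY\\ V_{11}U&V_{12}},
\]
with inverses $\wtilE^{-1}=\sbm{V_{21}&V\\ V_{11}&V_{12}}$ and $\wtilF^{-1}=\sbm{-V_{12}&I_\cX\\ U_{22}V&U_{21}}$. Substituting the expressions for $U_{ij}$ and $V_{ij}$ obtained above yields exactly the four matrices claimed in the statement, and Theorem \ref{T:MCeqsEAE} guarantees that these operators are invertible and that $\sbm{U&0\\0&I_\cY}=\wtilE\sbm{V&0\\0&I_\cX}\wtilF$, so $U$ and $V$ are equivalent after extension via $\wtilE$ and $\wtilF$.

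No step in this argument requires any new computation: everything is a direct application of the two halves of Theorem \ref{T:MCeqsEAE}. The only thing to be careful about is the bookkeeping of the block entries, in particular keeping the EAE decomposition of $E$ and $F$ consistent with the matricial coupling entries of $\whatU$ and $\whatV=\whatU^{-1}$; once this identification is made, no further verification is needed.
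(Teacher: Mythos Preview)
Your proof is correct and follows exactly the same approach as the paper's own proof: apply the second half of Theorem~\ref{T:MCeqsEAE} (EAE $\Rightarrow$ MC) to produce $\whatU$ and $\whatV$, then feed these into the first half (MC $\Rightarrow$ EAE) to obtain $\wtilE$, $\wtilF$ and their inverses. You have simply written out the bookkeeping that the paper summarizes as ``a straightforward computation.''
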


\begin{proof}[\bf Proof.]
The formulas for $\wtilE$ and $\wtilF$ and their inverses follow by applying the two constructions from Theorem  \ref{T:MCeqsEAE} in reversed order as follows. Starting with $E$ and $F$ as in \eqref{EFEFinvdec}, define $\wtilU$ and $\wtilV$ as in the second part of Theorem \ref{T:MCeqsEAE} (the part which proves the implication (EAE) $\Rightarrow$ (MC)), and consequently apply the formulas from the first part of Theorem \ref{T:MCeqsEAE} (from the converse direction (MC) $\Rightarrow$ (EAE)). This results in the operators $\wtilE$ and $\wtilF$, and their inverses, as follows by a straightforward computation.
\end{proof}

\begin{corollary}\label{C:EAEspecialform}
Assume the Banach space operators  $U$ on $\cX$ and $V$ on $\cY$ are equivalent after extension. Then $U$ and $V$ are equivalent after extension via invertible operators $E$ and $F$ as in (EAE) with
\begin{itemize}
\item[(i)] $\cX_0=\cY$ and $\cY_0=\cX$;

\item[(ii)] $F=\mat{cc}{* & I_{\cY}\\ * & *}$ and
$F^{-1}=\mat{cc}{* & I_{\cX}\\ * & *}$.
\end{itemize}
Here $F$ and $F^{-1}$ are decomposed accordingly to \eqref{EFEFinvdec} and the $*$-s indicate unspecified entries in the operator decompositions. Moreover, in this case the right upper corners in the operator matrix decompositions of $E$ and $E^{-1}$ in \eqref{EFEFinvdec} equal $U$ and $V$, respectively.
\end{corollary}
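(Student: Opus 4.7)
The plan is to obtain this corollary as an immediate bookkeeping consequence of Lemma \ref{L:toFS}, without any new construction. Starting with arbitrary invertible operators $E$ and $F$ witnessing (EAE), the lemma hands us replacement operators $\wtilE$ and $\wtilF$ (together with their explicit inverses) that also witness (EAE) between the same $U$ and $V$. The strategy is simply to relabel $E:=\wtilE$, $F:=\wtilF$ and to read off the required structural properties from the explicit formulas given in Lemma \ref{L:toFS}.

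First I would verify the domains/codomains in (i). Inspecting $\wtilE=\sbm{-E_{11}&U\\F_{12}E_{21}&F_{11}}$, the $(1,1)$-block $-E_{11}$ acts $\cY\to\cX$ and the $(2,2)$-block $F_{11}$ acts $\cX\to\cY$, so $\wtilE$ necessarily acts from $\cY\dpl\cX$ onto $\cX\dpl\cY$. Matching this against the decomposition \eqref{EFEFinvdec} (where $E$ acts from $\cY\dpl\cY_0$ onto $\cX\dpl\cX_0$) forces $\cY_0=\cX$ and $\cX_0=\cY$. The same spaces appear consistently when reading the block sizes of $\wtilF$ and the two inverses, confirming (i).

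Next, to get (ii) and the ``moreover'' statement, I would simply point to the relevant blocks in the four formulas of Lemma \ref{L:toFS}. The $(1,2)$-entry of $\wtilF$ is literally $I_\cY$ and the $(1,2)$-entry of $\wtilF^{-1}$ is literally $I_\cX$, which, with the identifications $\cX_0=\cY$ and $\cY_0=\cX$ from step (i), matches the prescribed form in (ii). Likewise the $(1,2)$-entry of $\wtilE$ is $U$ and the $(1,2)$-entry of $\wtilE^{-1}$ is $V$, giving the final assertion about the upper right corners of $E$ and $E^{-1}$.

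There is no substantive obstacle here: the corollary is purely extractive, so the ``hard part'' is only the notational one of making sure the spaces $\cX$ and $\cY$ are consistently tracked when replacing the generic extension spaces by the concrete choices $\cX_0=\cY$ and $\cY_0=\cX$. Once that bookkeeping is done, the decomposition \eqref{EFEFinvdec} applied to $\wtilE,\wtilF$ yields exactly the claimed form.
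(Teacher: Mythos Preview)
Your approach is essentially the same as the paper's: both derive (i) and (ii), and the existence of the claimed upper-right corners, by simply reading off the block entries of $\wtilE$, $\wtilF$, $\wtilE^{-1}$, $\wtilF^{-1}$ produced in Lemma \ref{L:toFS}.

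There is one refinement in the paper's proof that you do not address. The phrase ``in this case'' in the \emph{moreover} clause is meant to apply to \emph{any} pair $E,F$ satisfying (i) and (ii), not only to the specific $\wtilE,\wtilF$ coming from Lemma \ref{L:toFS}. The paper establishes this stronger reading by comparing the $(1,2)$-entries on both sides of
\[
\sbm{U&0\\0&I}F^{-1}=E\sbm{V&0\\0&I}
\quad\text{and}\quad
E^{-1}\sbm{U&0\\0&I}=\sbm{V&0\\0&I}F,
\]
which, with $F_{12}=I_\cY$ and $F_{12}^{(-1)}=I_\cX$, forces $E_{12}=U$ and $E_{12}^{(-1)}=V$. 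This stronger version is what makes the corollary combine cleanly with Lemma \ref{L:EAEtrans} later: after the transformation there one only knows that the \emph{new} $E,F$ still satisfy (i) and (ii), and one needs the upper-right corners of the new $E^{-1}$ to again be $V$. Your argument, which only verifies the corners for the particular output of Lemma \ref{L:toFS}, does not by itself cover that.
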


\begin{proof}[\bf Proof.]
The observation that we can arrange equivalence after extension with (i) and (ii) follows directly from Lemma \ref{L:toFS}. This lemma also shows that $E$ and $E^{-1}$ have $U$ and $V$ in their respective right upper corners. However, we claim that this holds as a consequence of (i) and (ii). Indeed, this follows directly from inspecting the $(1,2)$-entries on both sides of the identities $\sbm{U&0\\0&I}F^{-1}=E\sbm{V&0\\0&I}$ and $E^{-1}\sbm{U&0\\0&I}=\sbm{V&0\\0&I}F$.
\end{proof}

As observed in Section \ref{S:review}, the question whether equivalence after extension implies Schur coupling is answered affirmatively if we can show that equivalence after extension implies strong equivalence after extension, i.e., with $E$ and $F$ such that the operators $E_{21}$ and $F_{12}$ in the operator matrix decompositions \eqref{EFEFinvdec} are invertible. Hence, if $U$ and $V$ are equivalent after extension
via given operators $E$ and $F$ as in (EAE), one can try to modify these operators in such a way that they still prove the equivalence after extension of $U$ and $V$ but with the appropriate operators in their decompositions invertible. However, it is not clear what modifications achieve this. Note that we can always take $E$ and $F$ as in Corollary \ref{C:EAEspecialform}, in particular, it can always be arranged that $F_{12}$ is invertible. The next lemma gives an operation on the operators $E$ and $F$ in (EAE) that preserves the equivalence relation, i.e., equivalence after extension also follows with the operators resulting from the operation, and also preserves the special form of Corollary \ref{C:EAEspecialform}.

\begin{lemma}\label{L:EAEtrans}
Assume $U$ on $\cX$ and $V$ on $\cY$ are equivalent after extension via operators $E$ and $F$ as in (EAE). Let $X:\cX\to\cX_0$ and $Y:\cY\to\cY_0$ and set
\[
\wtilE=\mat{cc}{I_\cX&0\\X&I_{\cX_0}}E\mat{cc}{I_\cY&0\\Y&I_{\cY_0}},\quad
\wtilF=\mat{cc}{I_\cY&0\\-YV&I_{\cY_0}}F\mat{cc}{I_{\cX}&0\\-XU&I_{\cX_0}}.
\]
Then $\wtilE$ and $\wtilF$ are invertible, and
$\sbm{U&0\\0&I_{\cX_0}}=\wtilE \sbm{V&0\\0&I_{\cY_0}}\wtilF$.
Moreover, if $E$ and $F$ are of the special form of Corollary
\ref{C:EAEspecialform}, then so are $\wtilE$ and $\wtilF$.
\end{lemma}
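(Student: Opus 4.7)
The approach is a direct computation, exploiting the block lower-triangular structure of all four new conjugating factors. There are three separate claims to verify: invertibility of $\wtilE$ and $\wtilF$, the extension identity, and preservation of the special form of Corollary \ref{C:EAEspecialform}.

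For invertibility, I would simply observe that each of $\sbm{I_\cX&0\\X&I_{\cX_0}}$, $\sbm{I_\cY&0\\Y&I_{\cY_0}}$, $\sbm{I_\cY&0\\-YV&I_{\cY_0}}$, $\sbm{I_{\cX}&0\\-XU&I_{\cX_0}}$ is unipotent block lower-triangular, hence invertible with inverse obtained by negating the off-diagonal block. Since $E$ and $F$ are invertible by hypothesis, $\wtilE$ and $\wtilF$ are products of invertible operators and thus invertible.

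For the extension identity, I would proceed by telescoping. The key fact, verified by direct multiplication, is
\[
\mat{cc}{I_\cY&0\\Y&I_{\cY_0}}\mat{cc}{V&0\\0&I_{\cY_0}}\mat{cc}{I_{\cX}&0\\-YV&I_{\cX_0}}=\mat{cc}{V&0\\0&I_{\cY_0}},
\]
and the analogous identity with $X$, $U$, $\cX$, $\cX_0$ in place of $Y$, $V$, $\cY$, $\cY_0$. Substituting the definitions of $\wtilE$ and $\wtilF$ into $\wtilE\sbm{V&0\\0&I_{\cY_0}}\wtilF$ and applying the first identity to the three innermost factors collapses the middle to $\sbm{V&0\\0&I_{\cY_0}}$; the hypothesis $E\sbm{V&0\\0&I_{\cY_0}}F=\sbm{U&0\\0&I_{\cX_0}}$ then reduces what remains to $\sbm{I_\cX&0\\X&I_{\cX_0}}\sbm{U&0\\0&I_{\cX_0}}\sbm{I_{\cX}&0\\-XU&I_{\cX_0}}$, which by the second identity equals $\sbm{U&0\\0&I_{\cX_0}}$.

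For the last claim, note first that $\cX_0$ and $\cY_0$ are unchanged by the transformation, so condition (i) of Corollary \ref{C:EAEspecialform} is automatic. For condition (ii), one needs the $(1,2)$-blocks of $\wtilF$ and $\wtilF^{-1}$ to remain $I_\cY$ and $I_\cX$. Writing $\wtilF^{-1}=\sbm{I_\cX&0\\XU&I_{\cX_0}}F^{-1}\sbm{I_\cY&0\\YV&I_{\cY_0}}$ and expanding shows that pre- and post-multiplying $F$ or $F^{-1}$ by unipotent block lower-triangular matrices leaves the $(1,2)$-block unchanged, since the upper-left identity blocks of those factors act trivially on the top block row in post-multiplication and on the right block column in pre-multiplication. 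I do not anticipate any real obstacle: this is a bookkeeping lemma whose only subtle ingredient is the matching choice of the off-diagonal entries $-XU$ and $-YV$, designed precisely to make the two telescoping identities above hold.
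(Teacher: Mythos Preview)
Your proof is correct and follows essentially the same route as the paper: both arguments reduce the extension identity to the telescoping fact $\sbm{I&0\\Y&I}\sbm{V&0\\0&I}\sbm{I&0\\-YV&I}=\sbm{V&0\\0&I}$ (and its $X,U$ analogue), and both verify preservation of the special form by inverting $\wtilF$ and observing that conjugation by block lower-triangular unipotents leaves the $(1,2)$-blocks of $F$ and $F^{-1}$ untouched. One cosmetic slip: in your displayed telescoping identity the subscripts on the third factor should be $\cY$ and $\cY_0$, not $\cX$ and $\cX_0$.
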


\begin{proof}[\bf Proof.]
Note that
\[
\mat{cc}{I&0\\Y&I}\mat{cc}{V&0\\0&I}
=\mat{cc}{V&0\\0&I}\mat{cc}{I&0\\YV&I}
=\mat{cc}{V&0\\0&I}\mat{cc}{I&0\\-YV&I}^{-1},
\]
and similarly $\sbm{I&0\\X&I}\sbm{U&0\\0&I}=\sbm{U&0\\0&I}\sbm{I&0\\XU&I}^{-1}$.
It follows that $\wtilE$ and $\wtilF$ are invertible and
\begin{align*}
\wtilE\mat{cc}{V&0\\0&I}\wtilF
&=\mat{cc}{I&0\\X&I}E\mat{cc}{I&0\\Y&I}\mat{cc}{V&0\\0&I}
\mat{cc}{I&0\\-YV&I}F\mat{cc}{I&0\\-XU&I}\\
&=\mat{cc}{I&0\\X&I}E\mat{cc}{V&0\\0&I}F\mat{cc}{I&0\\-XU&I}\\
&=\mat{cc}{I&0\\X&I}\mat{cc}{U&0\\0&I}\mat{cc}{I&0\\-XU&I}
=\mat{cc}{U&0\\0&I}.
\end{align*}
Hence our first claim holds.

Now assume $E$ and $F$ are as in Corollary \ref{C:EAEspecialform}. To see that $\wtilE$ an $\wtilF$ are of the same form, it suffices to show that the operators in the right upper corner of $\wtilF$ and $\wtilF^{-1}$ coincide with those in the right upper corner of $F$ and $F^{-1}$, respectively. However, this follows directly from the formula for $\wtilF$ and the fact that
$\wtilF=\sbm{I&0\\XU&I}F^{-1}\sbm{I&0\\YV&I}$.
\end{proof}

Given operators $U$ and $V$ which are equivalent after extension via operators $E$ and $F$ as in (EAE), Lemmas \ref{L:toFS} and \ref{L:EAEtrans} both give a way to modify $E$ and $F$ to new invertible operators that still prove the equivalence after extension of $U$ and $V$. Observe that the method of Lemma \ref{L:toFS} constructs a new pair $E$ and $F$ which are of the special form of Corollary \ref{C:EAEspecialform}, while the method of Lemma \ref{L:EAEtrans} preserves the special form of Corollary \ref{C:EAEspecialform}. The combination of these two methods provides us with sufficient techniques to prove Theorem \ref{T:InvApproxCase}.

\begin{proof}[\bf Proof of Theorem \ref{T:InvApproxCase}.]
Since equivalence after extension is a symmetric relation we may assume without loss of generality that $V$ can be approximated in norm by invertible operators on $\cY$. Without loss of generality we also may assume the equivalence after extension goes via operators $E$ and $F$ as in Corollary \ref{C:EAEspecialform}, in particular, $E_{12}^{(-1)}=V$ and $F_{12}=I$. Under the transformations of Lemma \ref{L:EAEtrans} the special form of Corollary \ref{C:EAEspecialform} is preserved. In particular, we have $\wtilF_{12}=I$ invertible, so we are done if we can find operators $X$ and $Y$ such that the left lower entry $\wtilE_{21}$ of $\wtilE$ in Lemma \ref{L:EAEtrans} becomes invertible. Note that the invertibility of $E$ gives
\[
E_{21}V+E_{22}E_{22}^{(-1)}=E_{21}E_{12}^{(-1)}+E_{22}E_{22}^{(-1)}=I.
\]
Since $V$ can be approximated by invertible operators there exists an invertible operator $V_i$ on $\cY$ such that $N:=E_{21}V_i+E_{22}E_{22}^{(-1)}$ is invertible. Then
we can take $X=0$ and  $Y=E_{22}^{(-1)}V_i^{-1}$, and with this choice for $X$ and $Y$ we get $\wtilE_{21}=NV_i^{-1}$. Hence $\wtilE_{21}$ is invertible. Since $\wtilF_{12}=I$ is also
invertible, we see that $U$ and $V$ are strongly equivalent after extension, and thus, by Theorem \ref{T:SMCnSEAEeqsSC}, $U$ and $V$ are Schur coupled.
\end{proof}

The following proposition suggests that approximation in norm by invertible operators is a natural assumption in connection with equivalence after extension, in the sense that if two operators are equivalent after extension and one can be approximated in norm by invertible operators, then so can the other.

\begin{proposition}
Assume the Banach space operators $U$ on $\cX$ and $V$ on $\cY$ are equivalent after extension. Then $U$ can be approximated in norm by invertible operators on $\cX$ if and only if $V$ can be approximated in norm by invertible operators on $\cY$.
\end{proposition}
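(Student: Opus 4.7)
My plan is to prove both directions simultaneously by symmetry: since equivalence after extension is a symmetric relation (swap the roles of $\cX, \cY$, replace $E, F$ by $F^{-1}, E^{-1}$), it suffices to show that if $V$ can be approximated in norm by invertibles on $\cY$ then so can $U$ on $\cX$. The core idea is to transport an approximating sequence for $V$ through the EAE relation and then use a Schur complement to extract an approximating sequence for $U$.

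More concretely, fix invertible operators $E : \cY \dpl \cY_0 \to \cX \dpl \cX_0$ and $F : \cX \dpl \cX_0 \to \cY \dpl \cY_0$ implementing (EAE) for $U$ and $V$. Given invertibles $V_n$ on $\cY$ with $V_n \to V$ in norm, set
\[
T_n := E \mat{cc}{V_n & 0 \\ 0 & I_{\cY_0}} F \ons \cX\dpl\cX_0.
\]
Each $T_n$ is a product of invertible operators, hence invertible, and $T_n \to E \sbm{V & 0\\ 0 & I_{\cY_0}} F = \sbm{U & 0 \\ 0 & I_{\cX_0}}$ in norm. Decompose $T_n = \sbm{A_n & B_n \\ C_n & D_n}$; then $A_n \to U$, $B_n \to 0$, $C_n \to 0$, $D_n \to I_{\cX_0}$.

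The main step is to exploit invertibility of $T_n$ to produce invertibles on $\cX$ close to $U$. Since the invertibles are open in $\cL(\cX_0)$ and $D_n \to I_{\cX_0}$, for all sufficiently large $n$ the operator $D_n$ is invertible. Then the standard block factorization
\[
T_n = \mat{cc}{I & B_n D_n^{-1} \\ 0 & I}\mat{cc}{A_n - B_n D_n^{-1} C_n & 0 \\ 0 & D_n}\mat{cc}{I & 0 \\ D_n^{-1} C_n & I}
\]
shows that the Schur complement $W_{2,2}(T_n) = A_n - B_n D_n^{-1} C_n$ must be invertible on $\cX$. Moreover $\|B_n D_n^{-1} C_n\| \le \|B_n\|\|D_n^{-1}\|\|C_n\| \to 0$, so $W_{2,2}(T_n) \to U$ in norm. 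This exhibits $U$ as a norm limit of invertible operators on $\cX$.

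I expect no serious obstacle here: the argument is essentially a perturbation calculation. The only point that requires a moment's care is justifying that $D_n^{-1}$ exists and is uniformly bounded for large $n$ (which follows because $D_n \to I_{\cX_0}$ and inversion is continuous at invertible operators), so that the Schur complement argument goes through and the error term $B_n D_n^{-1} C_n$ indeed vanishes. The symmetric direction is then immediate by interchanging $U$ with $V$ and $(E,F)$ with $(F^{-1}, E^{-1})$ in the above construction.
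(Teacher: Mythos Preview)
Your argument is correct. Each step is sound: the product $T_n = E\sbm{V_n & 0 \\ 0 & I_{\cY_0}}F$ is invertible, converges in norm to $\sbm{U & 0 \\ 0 & I_{\cX_0}}$, so its blocks converge entrywise; openness of the invertibles gives $D_n$ invertible with $D_n^{-1}\to I_{\cX_0}$ for large $n$, whence the Schur complement $A_n - B_nD_n^{-1}C_n$ is invertible on $\cX$ and converges to $U$. The symmetry reduction is also legitimate.

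Your route differs from the paper's. The paper first invokes Corollary~\ref{C:EAEspecialform} to put $E$ and $F$ in the special form with $E_{12}=U$ and $E_{12}^{(-1)}=V$; it then perturbs the $(1,2)$ entry of $E$ to $U_n$, shows the resulting $E_n$ is invertible by a norm estimate, and reads off $V_n$ as the $(1,2)$ entry of $E_n^{-1}$, which is invertible by a Schur-complement identity using invertibility of $U_n$. Your argument is more elementary in that it works with an \emph{arbitrary} pair $E,F$ realizing (EAE) and does not need the preparatory Lemma~\ref{L:toFS} or Corollary~\ref{C:EAEspecialform}; the Schur complement you take is with respect to the $(2,2)$ block $D_n\to I_{\cX_0}$, which is a more robust pivot than the paper's $U_n$. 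The paper's approach, on the other hand, fits naturally into the machinery already built in Section~\ref{S:InvApprox} and yields an explicit formula for the approximants $V_n$ in terms of the entries of $E$.
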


\begin{proof}[\bf Proof.]
Since equivalence after extension is a symmetric relation it suffices to prove one direction. Assume $U$ can be approximated in norm with invertible operators. Let $(U_n)_{n=0}^\infty$ be a sequence of invertible operators on $\cX$ such that $\|U-U_n\|\to 0$. Since $U$ and $V$ are equivalent after extension, we may assume they are equivalent after extension via operators $E$ and $F$ of the special form of Corollary \ref{C:EAEspecialform}, i.e., decomposed as in \eqref{EFEFinvdec} with $F_{12}=I_{\cY}$, $F_{12}^{(-1)}=I_\cX$, $E_{12}=U$ and $E_{12}^{(-1)}=V$. Because $E$ is invertible, there exists a $\de>0$ such that $\|Ev\|\geq \de\|v\|$ for each $v\in \cY\dpl\cX$, i.e., $\|E^{-1}\|\leq \de^{-1}$. Now without loss of generality we can assume $\|U-U_n\|\leq \half\de$ for each $n$. Set
\[
E_n=\mat{cc}{E_{11}&U_n\\ E_{21}& E_{22}}=E-T_n,\quad \mbox{where}\quad
T_n=\mat{cc}{0& U-U_n\\ 0&0}.
\]
As $\|T_n\|=\|U-U_n\|\leq \half\de$ for each $n$, it follows
\[
\|E_n v\|=\|E v- T_n v\|\geq \|Ev\|-\|T_nv\|\geq \half \de\|v\|.
\]
Hence $E_n$ is invertible and $\|E_n^{-1}\|\leq \frac{2}{\de}$.

Let $V_n$ be the right upper corner in the decomposition of $E_n^{-1}$, decomposed as in \eqref{EFEFinvdec}. By the standard Schur complement inversion formulas, cf., \cite{BGKR08}, we obtain that $V_n=E_{21}- E_{11}U_n^{-1}E_{22}$ and that $V_n$ is invertible, since $E_n$ is invertible. Since $E_{12}^{(-1)}=V$, we have $\|V-V_n\|\leq \|E^{-1}-E_n^{-1}\|$. Note that $E^{-1}-E_n^{-1}=-E^{-1}T_n E_n^{-1}$. Hence
\[
\|V-V_n\|\leq \|E^{-1}-E_n^{-1}\|\leq \|E^{-1}\|\|T_n\|\|E_n^{-1}\|
\leq\frac{2}{\de^2}\|T_n\|\to 0.
\]
Thus $V$ is approximated, in norm, by the sequence $(V_n)_{n=0}^\infty$ of invertible operators on $\cY$.
\end{proof}

\section{Proof of Theorem \ref{T:main} and an example}\label{S:MT}
\setcounter{equation}{0}

In the previous two sections we proved that the three operator relations of Definition \ref{D:eqrels} coincide for (a) Hilbert space operators $U$ and $V$ with closed range, and (b) Banach space operators $U$ and $V$ that can be approximated in norm by invertible operators. In both cases it suffices to check that either $U$ or $V$ is in this class of operators, since the relations imply the other operator is then also in the same class.

The question now arises if there exist operators that are not of one of these two types. It turns out that this is not the case for operators acting between separable Hilbert spaces. However, in Example \ref{E:NotTwoCases} below we will give an example of an operator acting on a non-separable Hilbert space which is of neither one of these two types.

In order to prove the main result we recall the following theorem by Feldman and Kadison \cite{FK54}.

\begin{theorem}\label{T:FK54}
A Hilbert space operator $T$ on $\cH$, with $\cH$ separable, cannot be approximated by invertible operators on $\cH$ if and only if $T=XY$ with $X$ an invertible operator on $\cH$ and $Y$ a partial isometry on $\cH$ such that the kernel and co-kernel of $X$ have different dimension.
\end{theorem}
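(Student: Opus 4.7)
The statement requires one interpretive remark: since any invertible $X$ has trivial kernel and co-kernel, the condition on different kernel and co-kernel dimensions must refer to the partial isometry $Y$, and I proceed under this reading. My plan is to reformulate the theorem as ``$T$ is not approximable by invertibles if and only if $T$ is semi-Fredholm with nonzero index,'' prove both halves of that reformulation, and then upgrade the second half to the desired explicit factorization.

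For the implication that the factorization precludes approximability, suppose $T=XY$ with $X$ invertible and $Y$ a partial isometry satisfying $\dim\ker Y\neq \dim\ker Y^*$. A partial isometry automatically has closed range, and in a separable Hilbert space the inequality of dimensions forces at least one of $\ker Y$ and $\ker Y^*$ to be finite; hence $Y$ is semi-Fredholm with nonzero index. Left multiplication by the invertible $X$ preserves closed range and kernel dimensions, so $T$ has the same nonzero semi-Fredholm index. The set of semi-Fredholm operators is norm-open, the index is locally constant on it, and every invertible has index $0$; consequently $T$ cannot be a norm limit of invertibles.

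For the converse I would argue the contrapositive: every operator that is either not semi-Fredholm or has semi-Fredholm index $0$ is approximable by invertibles. When $T$ is Fredholm of index $0$, choose an isomorphism $K\colon \ker T\to \ker T^*$, extend by zero to an operator on $\cH$, and verify by a direct kernel/range computation that $T+\varepsilon K$ is invertible for every $\varepsilon\neq 0$ and converges to $T$ as $\varepsilon\to 0$. When $T$ is not semi-Fredholm -- so either $\ran T$ fails to be closed or both $\ker T$ and $\ker T^*$ are infinite dimensional -- separability of $\cH$ lets one match infinite-dimensional spectral subspaces of $|T|$ and $|T^*|$ by partial isometries and use the spectral theorem applied to $|T|$ near $0$ to produce an invertible perturbation of arbitrarily small norm. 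I expect this non-semi-Fredholm case, especially when $\ran T$ is not closed, to be the main technical obstacle, since a finite-rank correction no longer suffices and one must work with continuous spectrum of $|T|$ at $0$.

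Once $T$ is known to be semi-Fredholm with nonzero index, I construct the factorization explicitly from the polar decomposition $T=V|T|$, where $V$ is the partial isometry with $\ker V=\ker T$ and $\ker V^*=\ker T^*$. Set $Y:=V$, which automatically satisfies $\dim\ker Y\neq\dim\ker Y^*$. Since $T$ has closed range, $|T|$ restricts to an invertible positive operator on $(\ker T)^\perp$. Define
\[
X \;:=\; V|T|V^* \;+\; (I-VV^*).
\]
Using $V^*V=P_{(\ker T)^\perp}$ and $VP_{(\ker T)^\perp}=V$, one computes $XV=V|T|V^*V+V-VV^*V = V|T|P_{(\ker T)^\perp}+V-V = V|T|=T$. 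The summand $V|T|V^*$ is a positive invertible on $\ran V$ (being the conjugate of $|T|\big|_{(\ker T)^\perp}$ by the unitary $V^*\big|_{\ran V}\colon \ran V\to (\ker T)^\perp$), while $I-VV^*$ is the identity on $(\ran V)^\perp$; hence $X$ is a positive invertible operator on $\cH=\ran V\oplus(\ran V)^\perp$, completing the factorization $T=XY$.
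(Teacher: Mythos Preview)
The paper does not prove this theorem; it is simply quoted from Feldman and Kadison \cite{FK54} as an external tool, so there is no in-paper argument to compare against. Your interpretive fix is correct and necessary: the kernel/co-kernel condition must refer to the partial isometry $Y$, not to the invertible $X$, and indeed the paper's own Lemma~\ref{L:MPdec} (stated immediately after) confirms that the existence of such a factorization $T=XY$ is equivalent to $T$ having closed range. Your reformulation ``$T$ is not approximable by invertibles if and only if $T$ is semi-Fredholm of nonzero index'' is therefore exactly the right target.

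The parts of your argument that are written out in full are correct. The forward direction (factorization $\Rightarrow$ not approximable) is a clean application of the openness of the semi-Fredholm set and local constancy of the index; your observation that in a separable space unequal Hilbert dimensions of $\ker Y$ and $\ker Y^*$ force one of them to be finite is the key point. The explicit factorization $X=V|T|V^*+(I-VV^*)$, $Y=V$ from the polar decomposition is correct and nicely checked; it is essentially the construction behind Lemma~\ref{L:MPdec}.

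The genuine gap is in execution, not strategy: the non-semi-Fredholm case of the converse is only sketched. Your outline --- match spectral subspaces $E_{[0,\varepsilon]}(|T|)\cH$ and $E_{[0,\varepsilon]}(|T^*|)\cH$ by a partial isometry and perturb --- is the right idea, but turning it into a proof requires real work: one must show these subspaces have equal (countably infinite) dimension for all small $\varepsilon$ precisely when $T$ fails to be semi-Fredholm, construct the perturbation so that the resulting operator is actually invertible (not merely injective with dense range), and control its norm. The closed-range case with both defects infinite and the non-closed-range case behave differently and are usually treated separately. This is exactly the substantive content of the Feldman--Kadison paper, and your proposal does not yet supply it; as written, what you have is a correct reduction plus an honest acknowledgement of where the difficulty lies.
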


The special form $T=XY$ in Theorem \ref{T:FK54} can be rephrased in a way that is more suitable for our purpose.

\begin{lemma}\label{L:MPdec}
A Hilbert space operator $T$ on $\cH$ is of the form $T=XY$ with $X$ an invertible operator on $\cH$ and $Y$ a partial isometry on $\cH$ if and only if $T$ has closed range.
\end{lemma}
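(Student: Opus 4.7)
The plan rests entirely on the polar decomposition of $T$: I will exhibit $Y$ as the partial isometry produced by polar decomposition and obtain $X$ by a small correction of the positive factor on its kernel.

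For the easy direction, suppose $T=XY$ with $X$ invertible and $Y$ a partial isometry. Then $Y$ restricts to an isometry on $(\ker Y)^\perp$, so $\im Y = Y((\ker Y)^\perp)$ is closed; since $X$ is a homeomorphism of $\cH$, $\im T = X(\im Y)$ is closed as well.

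For the substantive direction, assume $T$ has closed range, and take the polar decomposition $T=|T^*|W$, where $|T^*|:=(TT^*)^{1/2}$ and $W$ is the partial isometry with initial space $\overline{\im T^*}$ and final space $\overline{\im T}=\im T$. From the identities $T=|T^*|W$ and $|T^*|=TW^*$ one reads off $\im|T^*|=\im T$, so $|T^*|$ has closed range equal to $\im T$. Let $Q$ denote the orthogonal projection onto $(\im T)^\perp = \ker T^* = \ker|T^*|$, and define
\[
X:=|T^*|+Q, \qquad Y:=W.
\]
Relative to the decomposition $\cH=\im T\dpl(\im T)^\perp$ the operator $X$ is block diagonal: its first block is $|T^*|$ restricted to $\im T$, which is bijective because $|T^*|$ is self-adjoint with closed range $\im T$; its second block is the identity. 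Hence $X$ is invertible on $\cH$. Finally, since $\im W\subseteq\im T$ we have $QW=0$, so
\[
XY = |T^*|W + QW = |T^*|W = T,
\]
as required.

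There is essentially no obstacle. The only point to verify carefully is that $|T^*|$ inherits the closed-range property from $T$, which is standard and in fact falls out of the two polar-decomposition identities above.
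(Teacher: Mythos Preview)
Your proof is correct and takes essentially the same route as the paper: both use the polar decomposition to supply the partial isometry $Y$ and then make the positive factor invertible by padding it with the identity on its kernel. The paper first passes to the $2\times 2$ block form $T=\sbm{T_1&0\\0&0}$ afforded by the closed range and polar-decomposes the invertible block $T_1$, whereas you polar-decompose $T$ globally as $T=|T^*|W$ and add the projection $Q$ onto $\ker|T^*|=\ker T^*$; the resulting operators $X$ and $Y$ are the same.
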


\begin{proof}[\bf Proof.]
If $T$ is of the form $T=XY$ with $X$ on $\cH$ invertible and $Y$ on $\cH$ a partial isometry, then $X$ and $Y$ both have closed range, which implies $T$ has closed range as well.

Conversely, assume $T$ has closed range. Then $T^*$ has closed range and $T$ decomposes as
\[
T=\mat{cc}{T_1&0\\0&0}
:\mat{cc}{\im T^*\\\ker T}\to\mat{c}{\im T\\\ker T^*},
\]
with $T_1$ invertible. Now let $T_1=X_1 Y_1$ be the polar decomposition
of $T_1$ with $X_1=(T_1^*T_1)^\half$ and $Y_1$ a partial isometry from $\im T^*$ to $\im T$. Since $T_1$ invertible, so is $X_1$ and $Y_1$ is unitary. Thus we have
\[
T=\mat{cc}{T_1&0\\0&0}=\mat{cc}{X_1 Y_1&0\\0&0}=
\mat{cc}{X_1&0\\0&I_{\ker T^*}}\mat{cc}{Y_1&0\\0&0},
\]
which is the product of an invertible operator and a partial isometry.
\end{proof}

\begin{proof}[\bf Proof of Theorem \ref{T:main}.]
Combining Theorem \ref{T:InvApproxCase} and Theorem \ref{T:MPcase} shows that it suffices to prove that an operator $T$ on a separable Hilbert space $\cH$ has either closed range or can be approximated by invertible operators. However, this follows directly from Theorem \ref{T:FK54} and Lemma \ref{L:MPdec}. Hence our proof is complete.
\end{proof}

Theorem \ref{T:FK54} is a specialization of a more general result from \cite{FK54} that holds for Hilbert space operators on non-separable Hilbert spaces as well. Bouldin \cite{B90} gave a more insightful criterion, introducing the notion of essential nullity. The latter criterion is beneficial in constructing an example of an operator that is of neither one of the two types treated by Theorems \ref{T:InvApproxCase} and \ref{T:GIcase}. We briefly recall a few definitions and the main result from \cite{B90}, stated here as Theorem \ref{T:B90} below.

Let $\cH$ be a not necessarily separable Hilbert space. Let $T$ on $\cH$ have polar decompositions $T=XY$ and $T=\wtilX Y$, with $X=(T^*T)^\half$ and $\wtilX=(TT^*)^\half$,
and $Y$ a partial isometry with initial space $\im T^*$ and final space $\ov{\im T}$. For each $\vep >0$ define
\begin{align*}
\cK_\vep&=\ker ((X-\vep I)\vee 0)=\{x\in\cH \colon \|Tx\|\leq \vep\|x\|\},\\
\wtil\cK_\vep&=\ker ((\wtilX-\vep I)\vee 0)=\{x\in\cH \colon \|T^*x\|\leq \vep\|x\|\}.
\end{align*}
Then define the cardinal numbers
\[
\mbox{ess null}\, T=\inf\{\textup{dim}\, \cK_\vep\colon \vep>0\},\quad
\mbox{ess def}\, T=\inf\{\textup{dim}\, \wtil\cK_\vep\colon \vep>0\}
=\mbox{ess null}\, T^*.
\]

\begin{theorem}\label{T:B90}
A Hilbert space operator $T$ on $\cH$ can be approximated in norm by invertible
operators on $\cH$ if and only if
\[
\mbox{ess null}\, T=\mbox{ess def}\, T.
\]
\end{theorem}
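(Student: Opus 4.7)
My plan is to prove both directions of Theorem \ref{T:B90}, interpreting $\cK_\vep$ and $\wtil\cK_\vep$ via their spectral descriptions $E_X([0,\vep])\cH$ and $E_{\wtilX}([0,\vep])\cH$ (which is the content of the kernel formulation $\ker((X-\vep I)\vee 0)$). For necessity, suppose $T_n \to T$ in operator norm with each $T_n$ invertible on $\cH$. The crucial observation is that for any invertible $S$, the partial isometry in its polar decomposition is a unitary, so $|S|$ and $|S^*|$ are unitarily equivalent and hence $\dim \cK_\vep(S) = \dim \wtil\cK_\vep(S)$ for every $\vep > 0$. Next, by the min-max description $\dim \cK_\vep(T) = \sup\{\dim\cM : \|T|_\cM\| \leq \vep\}$, setting $\de_n = \|T-T_n\|$ yields the dimension comparisons $\dim \cK_\vep(T) \leq \dim \cK_{\vep+\de_n}(T_n)$ and, symmetrically, $\dim \wtil\cK_{\vep+\de_n}(T_n) \leq \dim \wtil\cK_{\vep+2\de_n}(T)$. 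Chaining these with the invertibility identity for $T_n$ yields
\[
\dim \cK_\vep(T) \leq \dim \cK_{\vep+\de_n}(T_n) = \dim \wtil\cK_{\vep+\de_n}(T_n) \leq \dim \wtil\cK_{\vep+2\de_n}(T).
\]
Choosing $n$ so large that $\de_n < \vep/2$ and taking the infimum over $\vep > 0$ produces $\mbox{ess null}\, T \leq \mbox{ess def}\, T$; the reverse inequality follows by applying the same argument to $T^*$ and $T_n^*$.

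For sufficiency, write the polar decomposition $T = \wtilX Y$, where $Y$ is the partial isometry with initial space $(\ker T)^\perp$ and final space $\ov{\im T}$ that intertwines $X$ on $(\ker T)^\perp$ with $\wtilX$ on $(\ker T^*)^\perp$; in particular $Y$ maps $E_X(B)\cH$ isometrically onto $E_{\wtilX}(B)\cH$ for every Borel $B \subseteq (0,\infty)$. From this one verifies directly that $T(\cK_\vep) \subseteq \wtil\cK_\vep$ (with $T$ of norm at most $\vep$ there) and $T(\cK_\vep^\perp) \subseteq \wtil\cK_\vep^\perp$, so $T$ is block-diagonal, $T = T_{11} \dpl T_{22}$, with respect to $\cH = \cK_\vep \dpl \cK_\vep^\perp$ on the domain and $\cH = \wtil\cK_\vep \dpl \wtil\cK_\vep^\perp$ on the codomain. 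Here $\|T_{11}\| \leq \vep$, while $T_{22} = \wtilX|_{\wtil\cK_\vep^\perp} \circ Y|_{\cK_\vep^\perp}$ is the composition of a unitary and an operator bounded below by $\vep$, hence is boundedly invertible.

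Finally, the hypothesis $\mbox{ess null}\, T = \mbox{ess def}\, T = \kappa$ enters. The cardinal-valued function $\vep \mapsto \dim \cK_\vep$ is non-decreasing, and since the cardinals are well-ordered its infimum $\kappa$ is attained at some $\vep_1 > 0$; by monotonicity $\dim \cK_\vep = \kappa$ for all $\vep \in (0,\vep_1]$. Analogously, $\dim \wtil\cK_\vep = \kappa$ on some $(0, \vep_2]$. For any $\vep \in (0,\min(\vep_1,\vep_2)]$ pick a unitary $U \colon \cK_\vep \to \wtil\cK_\vep$ (which exists because both have Hilbertian dimension $\kappa$), and set $T_\vep = (\vep U) \dpl T_{22}$; this is invertible on $\cH$ with $\|T - T_\vep\| \leq \|T_{11}\| + \vep \leq 2\vep$, so letting $\vep \to 0^+$ supplies the required invertible approximants. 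I expect the main technical obstacle to be a clean justification of the block-diagonal decomposition, in particular tracking how the kernel summands $\ker T$ and $\ker T^*$ sit inside $\cK_\vep$ and $\wtil\cK_\vep$ via the spectral intertwining by $Y$; once this is established, the cardinal-arithmetic step of simultaneously matching the two spectral dimensions on a common positive $\vep$-interval is essentially bookkeeping driven by the well-ordering of the cardinals.
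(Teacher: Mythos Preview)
The paper does not give its own proof of Theorem \ref{T:B90}; the result is quoted from Bouldin \cite{B90} and used only as a tool in constructing Example \ref{E:NotTwoCases}. So there is no ``paper's proof'' to compare against.

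That said, your argument is correct and is essentially a clean reconstruction of Bouldin's original proof. Both directions are sound: for necessity, the chain of dimension inequalities via the min--max description is valid (the key point being that $P_{\cK_\vep}|_\cM$ is injective whenever $\|T|_\cM\|\le\vep$, which does give $\dim\cM\le\dim\cK_\vep$ in the Hilbert-dimension sense via a density-character argument); for sufficiency, the spectral intertwining $Y E_X(B)=E_{\wtilX}(B)Y$ for $B\subset(0,\infty)$ gives exactly the block-diagonal splitting $T=T_{11}\dpl T_{22}$ you describe, and the well-ordering of cardinals lets you realize the two infima simultaneously on a common interval $(0,\min(\vep_1,\vep_2)]$, after which the invertible approximant $\vep U\dpl T_{22}$ does the job. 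The technical worry you flag about tracking $\ker T\subset\cK_\vep$ and $\ker T^*\subset\wtil\cK_\vep$ is harmless: $Y$ annihilates $\ker T$ and the remaining piece $E_X((0,\vep])\cH$ is carried unitarily to $E_{\wtilX}((0,\vep])\cH$, so the block structure is exact.
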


\begin{example}\label{E:NotTwoCases}
Let $\cU$ be any nonseparable Hilbert space. We construct an operator $T$ on the nonseparable Hilbert space $\cH=\ell^2_+(\cU)$ in the following way. Let $\{u_1,u_2,\ldots\}$ be an orthonormal sequence in $\cU$ and let $\wtil\cU$ be the closure of the span of $u_1,u_2,\ldots$ in $\cU$. Write $\wtil\cU^\perp$ for the orthogonal complement of $\wtil\cU$ in $\cU$. Now define $T_0$ on $\cU$ by
\[
T_0=\mat{cc}{I_{\wtil\cU^\perp}&0\\0&\wtilT_0}\ons\mat{l}{\wtil\cU^\perp\\\wtil\cU},
\quad \wtilT_0 u_n=\frac{1}{n} u_n,\ \ n=1,2,\ldots.
\]
Finally set
\[
T
=\mat{ccccc}{0&0&0&0&\cdots\\T_0&0&0&0&\cdots\\0&T_0&0&0&\cdots\\
0&0&T_0&0&\cdots\\\vdots&\vdots&&\ddots&\ddots} \ons \ell^2_+(\cU).
\]
Then for any $\vep>0$ we have $\cU\oplus \{0\}\oplus \{0\}\oplus \{0\}\cdots\subset \wtil\cK_\vep(T)$, and hence $\wtil\cK_\vep(T)$ is nonseparable. On the other hand, we have $\cK_\vep(T)=\cK_\vep(T_0)\oplus\cK_\vep(T_0)\oplus\cK_\vep(T_0)\oplus\cdots$,
which is separable. Hence we can conclude that for this operator $T$
\[
\mbox{ess nul}\, T\not=\mbox{ess def}\, T.
\]
Thus $T$ cannot be approximated in norm by invertible operators. However, since $T_0$ does not have closed range, the same holds true  for $T$. Hence $T$ is not generalized invertible either.
\end{example}


\end{document}